\newcommand{\ip}[2]{\left\langle {#1}, {#2} \right \rangle}
\newcommand{\abs}[1]{\left\vert {#1}\right\vert}
\newcommand{\norm}[1]{\left\Vert {#1}\right\Vert}
\newcommand{\avg}[1]{\left\langle {#1}\right\rangle}
\newcommand{\ep}{\epsilon}
\newcommand{\vep}{\varepsilon}
\newtheorem{theorem}{Theorem}
\newtheorem{proposition}{Proposition}
\newtheorem{lemma}{Lemma}
\newtheorem{definition}{Definition}
\newcommand{\Apup}{{A_p^{\uparrow}}}
\newcommand{\Atwoup}{{A_2^{\uparrow}}}
\newcommand{\Atwodown}{{A_2^{\downarrow}}}
\DeclareMathOperator{\supp}{supp}
\DeclareMathOperator{\dist}{dist}
\title{An $A_2$ Theorem for One-Sided Calderón-Zygmund Operators}
\author{A. Walton Green}
\address{A. Walton Green, Department of Mathematics, Illinois State University, Campus Box 4520, Normal, IL 61790}
\email{awgree1@ilstu.edu}
\author{Ljupcho Petrov}
\address{Ljupcho Petrov, Department of Mathematics, Washington University in St. Louis, One Brookings Drive, St. Louis, MO 63130}
\email{petrov.l@wustl.edu}
\author{Brett D. Wick}
\address{Brett D. Wick, Department of Mathematics, Washington University in St. Louis, One Brookings Drive, St. Louis, MO 63130}
\email{bwick@wustl.edu}
\thanks{BDW's research is partially supported by National Science Foundation DMS Grant \# 2349868}
\subjclass[2020]{42B20 (primary); 42B25 (secondary)}
\begin{document}

\begin{abstract}
We present a proof of the one-sided $A_2$ theorem in dimension one, with a logarithmic loss. This theorem concerns one-sided Calder\'on-Zygmund operators (CZOs) whose kernels $K(x,y)$ vanish whenever $x < y$. These operators are bounded on $L^2(w)$ provided that the weight $w$ belongs to the one-sided class $A_2^{\uparrow}$. The argument reduces the norm
estimate to testing on indicator functions via a two-weight testing theorem. By combining this with the weak-type $(1,1)$ estimate in the one-sided setting and an extrapolation theorem, we obtain the one-sided $A_2^{\uparrow}$ theorem with a logarithmic loss. We develop a localized theory on fixed intervals by introducing adapted weight classes and showing that the same quantitative bound holds locally for one-sided operators. 
\end{abstract}

\maketitle

\section{Introduction}
One-sided Calderón-Zygmund operators present a fascinating phenomenon in harmonic analysis. By destroying the symmetry of classical singular integrals and truncating their kernels to act only forward or backward, we obtain operators that are bounded on a strictly larger class of weighted spaces. Remarkably, these asymmetric operators satisfy weighted bounds with weight characteristics that can be arbitrarily smaller than what the classical $A_2$ theory requires. This behavior, first observed by Sawyer \cite{sawyer-one-sided} for the one-sided maximal function, suggests that directionality fundamentally changes the weighted theory. Precisely because the admissible weight class is significantly larger (and its characteristic correspondingly smaller), converting this qualitative observation into sharp quantitative bounds has remained a challenging open problem.

While Hytönen's celebrated 2012 resolution of the $A_2$ conjecture \cite{hytonen12} provided optimal bounds for symmetric operators, the one-sided theory has resisted similar progress: neither symmetric dyadic shifts nor sparse domination, the modern tools of weighted theory, can capture the improved bounds that one-sided operators enjoy. This paper provides the first quantitative weighted bound for one-sided Calderón-Zygmund operators.

The classical $A_2$ theorem states that for any Calder\'on-Zygmund operator (CZO) $T$, there exists $C>0$ such that for all weights $w$ in the Muckenhoupt $A_2$ class,
$$
\| T f \|_{L^2(w)} \leq C [w]_{A_2} \| f \|_{L^2(w)}, 
$$
where $[w]_{A_2}$ is the $A_2$ characteristic of the weight $w$. A one-sided Calder\'on--Zygmund operator is a standard CZO whose kernel is truncated so that it vanishes when $y$, the input variable, is larger or smaller than the output variable $x$. This directional truncation fundamentally changes the operator's behavior, but in most cases, by taking advantage of the directional nature, one seeks more refined estimates than those that are available for symmetric singular integrals. Following the terminology of Hytönen--Rosén \cite{hytonen-rosen}, we distinguish between upward- and downward-mapping CZOs.

The central challenge is that one-sided operators destroy the symmetry exploited by all existing proofs of sharp weighted bounds. Symmetric model operators, such as Haar shifts \cite{hytonen12} or sparse operators \cites{lerner-a2-13,lacey-a2-17} do not satisfy one-sided weighted bounds. One-sided analogues of these model operators do satisfy one-sided weighted bounds \cite{lacey-one-sided}, yet representation theorems or sparse domination principles are not yet established in terms of such model operators. The approach we take utilizes the two-weight theory of Nazarov-P\'erez-Treil-Volberg \cites{ntv-jams,ntv-Tb,ntv-lost,ntv-cauchy, ptv}, extracting quantitative bounds through a delicate interplay between testing conditions and Lorentz-space estimates.

Aimar, Forzani, and Mart\'in-Reyes in \cite{aimar-weighted} demonstrated qualitatively that boundedness on $L^2(\mathbb{R}, w)$ of upward-mapping CZOs is characterized by the weight $w$ belonging to the class $A_2^{\uparrow}$. Chill and Kr\'ol \cite{Chill-Krol} extended this result to the case of operator-valued kernels and weights which vanish on a half-line. They were motivated by certain problems in maximal $L^p$ regularity of elliptic PDEs such as \cites{Auscher-Axelsson,Chill-Fiorenza}. We refer to \cite{Chill-Krol}*{Remark 5.2} for a more in-depth discussion of application of weighted and unweighted one-sided singular integral estimates to PDEs. These results did not aim for a sharp quantitative dependence on the operator norm on the weight. Our main result provides the first quantitative bound.

\begin{theorem}\label{thm:A2}
Let $T$ be an upward-mapping CZO. Then for any $w \in A_2^{\uparrow}$,
\begin{equation}\label{e:A2} 
\norm{T}_{L^2(\mathbb{R},w) \to L^2(\mathbb{R},w)} \lesssim [w]_{A_2^{\uparrow}} (1+\log [w]_{A_2^{\uparrow}}).
\end{equation}
\end{theorem}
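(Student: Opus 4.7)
The plan is to reduce the one-weight $L^2(w)$ bound to a two-weight testing condition on indicators via a one-sided analogue of the Nazarov--Pérez--Treil--Volberg two-weight $T1$ theorem, and then verify that testing bound using the weak-type $(1,1)$ inequality for one-sided CZOs combined with one-sided extrapolation and a Lorentz-space interpolation that introduces the logarithmic loss. Setting $\sigma = w^{-1}$ and substituting $f = g\sigma$, $L^2(w)$-boundedness of $T$ is equivalent to $L^2(\sigma) \to L^2(w)$ boundedness of the map $g \mapsto T(g\sigma)$. The NTV-type machinery then reduces the operator norm to verifying the Sawyer-type testing conditions
\begin{equation*}
\int_I \abs{T(\mathbf{1}_I \sigma)}^2\, w\, dx \;\lesssim\; \mathcal{T}^2\,\sigma(I),
\qquad
\int_I \abs{T^*(\mathbf{1}_I w)}^2\, \sigma\, dx \;\lesssim\; \mathcal{T}^2\, w(I),
\end{equation*}
uniformly in intervals $I$, with target constant $\mathcal{T} \lesssim [w]_{A_2^{\uparrow}}(1+\log[w]_{A_2^{\uparrow}})$.

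Next I would develop a localized theory of one-sided CZOs on a fixed interval $I=(a,b)$, defining an adapted class $A_2^{\uparrow}(I)$ so that the restriction $w\vert_I$ has localized characteristic controlled by $[w]_{A_2^{\uparrow}}$; the dual testing condition falls in the same framework since $T^*$ is downward-mapping. The key estimate is then $\norm{T(\mathbf{1}_I \sigma)}_{L^2(I,w)} \lesssim C([w]_{A_2^{\uparrow}})\,\sigma(I)^{1/2}$. Here the forward truncation of the kernel is essential: the operator $f \mapsto T(\mathbf{1}_I f)$ acting on $I$ inherits a genuine one-sided structure, so one can invoke the classical one-sided weak-type $(1,1)$ inequality for CZOs against $A_1^{\uparrow}$-type weights proved by Aimar--Forzani--Martín-Reyes, now quantified against the adapted localized characteristic.

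Finally, I would combine the weak $(1,1)$ endpoint with an $L^{p_0}(w)$ bound at some fixed $p_0 > 2$ obtained through one-sided Rubio de Francia extrapolation, and real-interpolate between them on a Lorentz scale. Choosing the interpolation parameter as a function of $[w]_{A_2^{\uparrow}}$ and optimizing produces the factor $(1+\log[w]_{A_2^{\uparrow}})$ and delivers the testing constant $\mathcal{T}$ with the required dependence; feeding this back into the two-weight testing theorem proves Theorem~\ref{thm:A2}. The main obstacle, and the source of the logarithmic loss, is the absence of any sparse domination or dyadic representation for one-sided CZOs, because symmetric model operators fail to satisfy one-sided weighted bounds; both standard routes to a clean $A_2$ estimate are therefore unavailable, and the $\log$ factor is precisely the price paid for routing the argument through testing-plus-extrapolation rather than through a sharp pointwise sparse form.
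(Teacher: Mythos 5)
Your overall architecture matches the paper's: reduce to two-weight indicator testing via NTV-type machinery, control the testing constant by a weak-type bound, and obtain that weak-type bound from the one-sided weighted weak $(1,1)$ inequality plus Rubio de Francia extrapolation. However, there are two genuine gaps. The first and most serious is that you treat the reduction to testing as a black box (``The NTV-type machinery then reduces the operator norm to verifying the Sawyer-type testing conditions''). No off-the-shelf two-weight theorem applies here: the classical NTV/P\'erez--Treil--Volberg reduction requires, in addition to testing, a pivotal (or energy) condition and a genuine two-weight $A_2$ condition, neither of which follows from the one-sided characteristic $[w]_{A_2^{\uparrow}}$ alone --- indeed $[w]_{A_2^{\uparrow}}$ can be bounded while $[w]_{A_2}=\infty$. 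The paper's main technical work is precisely to redo the NTV argument exploiting the vanishing of $K(x,y)$ for $x<y$: only the \emph{lower} portion $\mathcal L(I\setminus I_\alpha)$ of the complement contributes to the Poisson averages, which yields a ``causal pivotal condition'' (Lemma \ref{l:pivotal}) controlled by $[w]_{A_2^{\uparrow}}^2$ via the sharp one-sided maximal function bound of Sawyer and Mart\'in-Reyes--de la Torre; this in turn drives the stopping-time/sparse construction and the paraproduct estimates. The restriction to dimension one is also essential here (non-nested, non-separated intervals must overlap), and your proposal does not engage with any of this.

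The second gap is in how you extract the testing constant from the endpoint. Interpolating the weak $(1,1)$ bound against a strong $L^{p_0}(w)$ bound for some $p_0>2$ is circular or unavailable: the weak $(1,1)$ estimate holds for $A_1^{\uparrow}$ weights, not for the given $w\in A_2^{\uparrow}$, and a quantified strong $L^{p_0}(w)$ bound for $w\in A_2^{\uparrow}$ is essentially what you are trying to prove. The correct mechanism (and the paper's) is: extrapolate the weak $(1,1)$ estimate for $A_1^{\uparrow}$ weights to an $L^2(w)\to L^{2,\infty}(w)$ bound for $w\in A_2^{\uparrow}$ via the one-sided Rubio de Francia algorithm (dualizing the level set against $h\in L^2(w)$ using $w\cdot Rh\in A_1^{\uparrow}$ with $[w\cdot Rh]_{A_1^{\uparrow}}\lesssim[w]_{A_2^{\uparrow}}$), and then use Lorentz duality --- restricted strong type is dual to weak type --- to bound $\sqrt{K_{\mathrm{gl}}}$ by the weak-type norms of $T$ and $T'$. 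The logarithm does not arise from optimizing an interpolation parameter at the $L^2$ level; it is already present in the sharp $A_1^{\uparrow}$ weak $(1,1)$ bound (the one-sided Lerner--Ombrosi--P\'erez estimate), where it comes from choosing $p-1\sim 1/\log(e+[w]_{A_1^{\uparrow}})$ in the Coifman--Fefferman-type inequality $\norm{Tf}_{L^p(w)}\lesssim pp'(r')^{1/p'}\norm{f}_{L^p(M^{\downarrow}_{r}w)}$. Finally, note that one also needs a localized version of this weak $(1,1)$ theory on a half-line, since upward weights may vanish on an upper half-space; the paper develops the classes $A_1^{\uparrow}(I_0)$ for exactly this purpose.
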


The $A_2^{\uparrow}$ class generalizes the classical Muckenhoupt $A_2$ class by taking the average of $w$ on the upper half of a cube and of $w^{-1}$ on the lower half (see Definition \ref{d:Ap} below), capturing the directional nature of the operator. The logarithmic factor represents the current gap from the \textit{one-sided $A_2$ conjecture} posed by Chen--Han--Lacey \cite{lacey-one-sided}, which asserts that this factor can be removed. These authors achieved the conjectured bound for the one-sided martingale transform, but the general case remains open.

The main step in proving Theorem \ref{thm:A2} is to apply the two-weight theory to show
\begin{equation}\label{e:T1}  
\norm{T}_{L^2(\mathbb{R},w)\to L^2(\mathbb{R},w)} \lesssim  [w]_{A_2^{\uparrow}} + \sqrt{K_{\mathrm{gl}}},
\end{equation}
where $K_{\mathrm{gl}}$ is the best constant in the global two-weight testing conditions for $T$:
\begin{equation}\label{e:test}
\norm{T(w^{-1} \chi_Q) }_{L^2(\mathbb{R},w)}^2 \le K_{\mathrm{gl}} \int_Q w^{-1}, \qquad\norm{T'(w \chi_Q) }_{L^2(\mathbb{R},w^{-1})}^2 \le K_{\mathrm{gl}} \int_{Q} w. 
\end{equation}
Once \eqref{e:T1} is established, Theorem \ref{thm:A2} follows from showing
\begin{equation}\label{e:KtoLog}
\begin{aligned} 
\sqrt{K_{\mathrm{gl}}} &\lesssim \norm{T}_{L^2(\mathbb{R},w)\rightarrow L^{2,\infty}(\mathbb{R}, w)} + \|T'\|_{L^2(\mathbb{R},w^{-1})\rightarrow L^{2,\infty}(\mathbb{R},w^{-1})} \\
& \lesssim [w]_{A_2^{\uparrow}} (1+\log [w]_{A_2^{\uparrow}}). 
\end{aligned} 
\end{equation}

The first inequality is a Lorentz-space argument, while the second follows by extrapolating the endpoint estimate
\begin{equation}\label{e:A1} 
\norm{T}_{L^1(\mathbb{R},w)\rightarrow L^{1,\infty}(\mathbb{R},w)} \lesssim [w]_{A_1^\uparrow} (1+\log [w]_{A_1^\uparrow}), 
\end{equation}
proved in the standard case by Lerner--Ombrosi--Perez \cite{lerner-a1} and extended to the one-sided, one-dimensional case in \cite{riveros-vidal}.

We prove Theorem \ref{thm:A2} only in the one-dimensional case, as we obtain the testing-constant bound in that setting (Theorems \ref{thm:test} and \ref{thm:main}). In higher dimensions, the situation is more mysterious: it is not even known whether $w \in A_2^{\uparrow}$ qualitatively characterizes boundedness on $L^2(\mathbb{R}^d,w)$ for upward-mapping CZOs. The classical Coifman--Fefferman strategy \cite{coifman74} was carried out in the one-dimensional, one-sided setting in \cite{aimar-weighted}, and the sharp good-$\lambda$ inequality was established in \cite{riveros-vidal}. However, in higher dimensions, these tools remain unavailable, presenting an important challenge for future work.

Beyond the specific bounds obtained, our work demonstrates that the two-weight Nazarov-P\'erez-Treil-Volberg machinery can yield effective quantitative estimates even when dyadic techniques fail. The higher-dimensional case, where even the qualitative theory remains incomplete, highlights how much remains to be understood about operators with inherent asymmetry.

\section{Preliminaries}
We will give the definitions in an arbitrary dimension $d$, though all our results will only hold when $d=1$. 
Let $x=(x_1,\ldots,x_d)$ and $y=(y_1,\ldots,y_d)$ in $\mathbb R^d$. We introduce the following ordering by saying
	\[ x < y, \mbox{ if } x_d < y_d.\]
\begin{definition}\label{def:cz}
A function $K$ in $L^1_{\mathrm{loc}}(\mathbb{R}^{2d} \setminus
\{(x,y) \in \mathbb R^{2d} : x=y\})$ is an upward-mapping Calder\'on-Zygmund kernel if there exist $C,\ep>0$ such that for all $x,y \in \mathbb R^d$, $x\ne y$,
\begin{itemize}
\item[1.] $K(x,y) = 0$ if $x<y$,
\item[2.] $|K(x,y)| \le \frac{C}{|x-y|^d}$,
\item[3.] for all $h \in \mathbb R^d$ such that $x>y$, $x+h>y$, and $|h| \le \tfrac 12 |x-y|$,

\begin{equation}\label{e:smooth}|K(x,y)-K(x+h,y)| + |K(x,y)-K(x,y-h)|<\frac{C|h|^\ep }{|x-y|^{d+\ep }}.\end{equation}
\end{itemize}
We say a linear operator $T:\mathscr S(\mathbb R^d) \to \mathscr S'(\mathbb R^d) $, is an upward-mapping Calder\'on-Zygmund operator (CZO) if
\begin{itemize}
	\item $T$ extends boundedly to $L^2(\mathbb R^d)$; and
	\item there exists an upward-mapping Calder\'on-Zygmund kernel $K$ such that for all $f$ compactly supported and $x \not\in \supp f$,
		\[ Tf(x) = \int K(x,y) f(y) \, dy.\]
\end{itemize}
The dual notion of upward-mapping is downward-mapping. Defining $K'(x,y) = K(y,x)$, we say $K$ is downward-mapping if $K'$ is upward-mapping and $T$ is downward-mapping if $T'$ is upward-mapping. 
\end{definition}

As discussed above, the majority of the literature concerning one-sided CZOs is restricted to dimension one. Two important features arise there. First, in this case, an upward- or downward-mapping CZO is also a standard CZO. One only needs to check that the smoothness condition holds, i.e. \eqref{e:smooth} holds for all $x,y$ and $|h| \le \frac 12 |x-y|$. For such $x,y,h\in \mathbb R$, $x+h$ and $x$ will always lie on the same side of $y$. So if $x<y$, then $x+h<y$ and $x< y-h$; thus $K(x,y) = K(x+h,y) = K(x,y-h) =0$. If $x>y$, then $x+h>y$ and we can apply \eqref{e:smooth}. Therefore, in dimension one, one has more tools available, which are inherited directly from the standard theory ($T1$ theorem, $L^p$ estimates, weak endpoint, $A_p$ weighted estimates). The second important feature in one dimension is that two cubes (intervals) $I$ and $J$ that are disjoint will either satisfy $I<J$ or $J<I$.

The only systematic study of one-sided (there called causal) CZOs in higher dimensions appears to be by Hyt\"onen-Ros\'en \cite{hytonen-rosen}, where they obtain sparse bounds which are sharper than those available for standard CZOs and do provide interesting one-sided estimates which do not hold for standard CZOs. However, they are not sharp enough to obtain weighted estimates in the class $\Atwoup$. In this case, upward or downward kernels are not necessarily standard kernels, yet they do satisfy the H\"ormander smoothness condition \cite{hytonen-rosen}*{Lemma 3.1}. Hence, some of this machinery persists, and in particular, the weak endpoint holds. One could also consider variants of the ordering $x<y$ in higher dimensions (see e.g. \cite{ombrosi-2d}) which may be more or less well-suited to a causal $A_2$ conjecture. 

For a (half-open) cube $I=[a_1,a_1+h) \times \cdots [a_d,a_d+h)$, we denote the lower and upper halves by
	\[ I^- = [a_1,a_1+h) \times \cdots [a_d ,a_d+ \tfrac h2), \qquad I^+ = [a_1,a_1+h) \times \cdots [a_d+ \tfrac h2 ,a_d+h).\]
Concerning the ordering $<$, we can say $I^- < I^{+}$.
We will also use the notation, for $0<q<\infty$, a measure $\mu$, and a measurable set $E \subset \mathbb R^d$,
	\[ \avg{f}_{q,\mu,E} = \left ( \frac{1}{\mu(E)} \int_E \abs{f}^q \, d\mu \right)^{\frac 1q}.\]
When $q=\infty$, this is simply the $L^\infty(E,\mu)$ norm of $f$, and when $q=1$ or $\mu$ is the Lebesgue measure, it is omitted from the notation.

Before introducing the one-sided weight classes, let us explain one technical difference between the one-sided situation and the classical one. Classical $A_p$ weights are assumed to be positive almost everywhere. One justification for this requirement is that CZOs are non-local, in the sense that even if $f$ is compactly supported, $Tf$ vanishes almost nowhere (for some specially chosen $T$). This principle shows that the condition $w>0$ a.e. is necessary for the class of CZOs to be bounded on $L^2(w)$. Upward-mapping CZOs inherit this non-local nature, but only in the upward direction. For this reason, a weight can vanish on some lower half-space
    \[ \mathbb H_z^{+} = \{ x \in \mathbb R^d : x > z\}; \quad \mathbb H^-_z = \{ x \in \mathbb R^d : x < z\}, \quad z \in \mathbb R^d.\]
This observation was made by Sawyer \cite{sawyer-one-sided}, but it has been underemphasized in the subsequent one-sided weighted theory. 
A sublinear operator $T$ is upward-mapping if for each $x \in \mathbb R^d$,
    \[ Tf(x) = 0, \quad \supp f > x. \]
Combining this with the triangle inequality, one obtains
    \begin{equation}\label{e:chi} \chi |Tf| \le \chi |T( \chi f)|,\end{equation}
where $\chi$ is the indicator function of some lower half-space $\mathbb H^-_z$. Therefore, if $w$ is a weight such that an upward-mapping operator $T$ is bounded on $L^p(w)$, then $T$ is also bounded on $L^p(\chi w)$. Indeed,
    \[ \int |Tf|^p \chi w \le \int |T(\chi f)|^p w \lesssim \int |f|^p \chi w.\]
\begin{definition}\label{d:Ap}
A locally integrable function $w$ is called an upward weight if there exists a lower half-space $\mathbb H \in \{\mathbb H^-_z,\mathbb R^d\}$ such that
\begin{itemize}
    \item $w=0$ on $\mathbb R^d \backslash \mathbb H$.
    \item $w$ is positive almost everywhere on $\mathbb H$.
\end{itemize}
Given an upward weight $w$, and $1 \le p <\infty$, define its $\Apup$ characteristic
	\begin{equation}\label{e:Ap} [w]_{\Apup} = \sup_{\substack{I \, \mathrm{cube} \\ I \subset \mathbb H}} \avg{w}_{I^+} \avg{ w^{-1}}_{\frac{1}{p-1},I^{-}} .\end{equation}
Then, $\Apup$ is the collection of all upward weights for which this characteristic is finite. 
\end{definition}

The technical consideration of weights that vanish on a half-space does not greatly affect the proof of the testing theorem, since that is mainly a result about the measures $\mu$ and $\nu$ introduced below, except for an invocation of Sawyer's result \cite{sawyer-one-sided}, which already accounts for vanishing on half-spaces (in this case, half-lines).

The classes of one-sided weights $A_p^{\uparrow}$ and $A_p^{\downarrow}$ are respectively called $A_p^-$ and $A_p^+$ in other papers such as \cite{aimar-weighted}, \cite{martin-reyes-sharp}, \cite{riveros-vidal}, etc.; however, we have chosen to use this notation to make the connection with the one-sided operators explicit: upward-mapping CZOs are bounded on $L^2(w)$ if $w \in \Atwoup$ and downward-mapping CZOs are bounded on $L^2(w)$ if $w \in \Atwodown.$

The classical Muckenhoupt $A_p$ class is defined as above, but with both $I^+$ and $I^-$ replaced by $I$ itself. To see that $\Apup$ is much larger than $A_p$, note that $[w]_{\Apup} \le 2^p [w]_{A_p}$. The same holds for $A_p^{\downarrow}$. Moreover, for any $f>0$ satisfying $f(x) \ge f(y)$ when $x<y$, we have $[fw]_{\Apup} \le [w]_{\Apup}$, and for any such function $f$, we also have $[f]_{\Apup}=1$.

\section{Testing theorem: setup and the easy parts}
In this section, we will outline the setup of the standard two-weight proof, following the outline of \cites{ntv-lost,ptv} and estimate the easy terms in our setting, which hold in all dimensions.

\subsection{Dyadic decomposition}
For a measure $\lambda$ and a dyadic grid $\mathcal D$, let us introduce the disbalanced Haar projections. For $I \in \mathcal D$, let $\mathcal H_I^\lambda$ be the subspace of $L^2(\mathbb R^d, d\lambda)$ consisting of $f$ satisfying
\begin{itemize}
	\item $\supp f \subset I$;
	\item $f$ is constant on each dyadic child of $I$;
	\item $\int f \,d \lambda =0.$
\end{itemize}
It is easy to see that $\mathcal H_I^\lambda \perp \mathcal H_{I'}^\lambda$ for $I \ne I'$. Thus, the orthogonal projections 
	\[ \Delta_I^{\lambda}: L^2(\mathbb R^d, d\lambda) \rightarrow \mathcal H_{I}^\lambda \]
are mutually orthogonal. For a set of cubes $\mathcal R \subset \mathcal D$, define
	\[ \mathbb P^\lambda_{\mathcal R} f = \sum_{I \in \mathcal R} \Delta^\lambda_I f.\]
For a cube $I \in \mathcal D$ and an integer $k$, we will also use the notation
 	\[ \mathcal D(I) = \{ I' \in \mathcal D : I' \subset I \}, \quad \mathcal D_k = \{I \in \mathcal D : \ell(I)=2^k\}.\]

Let us fix a weight $w \in \Atwoup$ and introduce the absolutely continuous measures
	\[ d\mu(x) = \chi_{\mathbb H}(x) w^{-1}(x) \, dx, \qquad d \nu(x) = \chi_{\mathbb H}(x)w(x) \, dx.\]
We will also use the shorthand $T_\mu = T(\chi_{\mathbb H} w^{-1} \cdot)$ and introduce the more precise local testing and weak boundedness constants. Let $K_\chi$ and $K_{\mathrm{WB}}$ be the smallest constants such that for all cubes $Q$
    \begin{equation}\label{e:local-test} \int_Q \abs{T_\mu \chi_Q}^2 \, d\nu \le K_\chi \mu(Q), \end{equation}
and for all $P$ disjoint from $Q$,
    \begin{equation}\label{e:WB} \abs{ \ip{T_\mu \chi_Q}{\chi_P}_{L^2(\mathbb R, d\nu)} } \le K_{\mathrm{WB}} \mu(Q)^{\frac 12} \nu(P)^{\frac 12} .\end{equation}
By standard duality considerations, and noting that $\max \{ K_\chi, K_{\mathrm{WB}} \} \le K_{\mathrm{gl}}$ from \eqref{e:test}, inequality \eqref{e:T1} follows from the next theorem, which is the main technical estimate of our paper.
\begin{theorem}\label{e:twoweight}Let $T$ be an upward-mapping CZO, $w \in \Atwoup$, and $K_\chi$ and $K_{\mathrm{WB}}$ as in \eqref{e:local-test} and \eqref{e:WB}. Then, for any $f \in L^2(\mathbb R,d\mu)$ and $g \in L^2(\mathbb R,d\nu)$,
	\begin{equation}\label{e:form} \abs{\ip{ T_\mu f}{g}_{L^2(\mathbb R,\nu)} }\lesssim \left( [w]_{\Atwoup} + \sqrt{K_\chi} + \sqrt{K_{\mathrm{WB}}} \right) \norm{f}_{L^2(\mathbb R,d\mu)} \norm{g}_{L^2(\mathbb R,d\nu)}, \end{equation}
where the implicit constant depends only on $C,\ep$ from Definition \ref{def:cz}.
\end{theorem}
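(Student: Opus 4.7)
The plan is to follow the Nazarov--Treil--Volberg two-weight testing machinery, exploiting the one-sided structure to kill many of the off-diagonal terms. First, I would expand $f$ and $g$ in the disbalanced Haar bases adapted to $\mu$ and $\nu$ respectively, so that the bilinear form becomes
\[
\ip{T_\mu f}{g}_{L^2(\nu)} = \sum_{I,J \in \mathcal D} \ip{T_\mu \Delta_I^\mu f}{\Delta_J^\nu g}_{L^2(\nu)},
\]
and then apply the random dyadic grid and the good/bad cube decomposition to reduce to summing over ``good'' pairs, absorbing the bad part by taking expectations.

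Next, I would partition the pairs $(I,J)$ according to their relative position and scale. There are essentially four regimes. In the first, $I$ and $J$ are disjoint and one-sided ordered with $I > J$; here the upward-mapping hypothesis forces the pairing to vanish, which is a simplification peculiar to the one-sided setting and is where the single dimension is used via the trichotomy of disjoint intervals. The second regime consists of disjoint pairs with $J > I$ and well-separated in scale or position, where I would exploit the Haar cancellation against the $\ep$-Hölder smoothness \eqref{e:smooth} of $K$; summing the resulting geometric series and using Cauchy--Schwarz against the upper/lower averages $\avg{w}_{I^+}\avg{w^{-1}}_{I^-}$ yields a bound of order $[w]_{\Atwoup}$ rather than $[w]_{A_2}$. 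The third regime is the near-diagonal, where $I$ and $J$ have comparable scale and overlap; this is handled directly by Cauchy--Schwarz on the Haar norms together with the weak-boundedness constant $K_{\mathrm{WB}}$ from \eqref{e:WB}.

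The last and most delicate regime is when one cube strictly contains the other with a large scale gap, which produces paraproduct-type terms. Here I would write the Haar function on the smaller cube as a sum of constants against indicators of its dyadic children, so that $T_\mu$ is effectively applied to $\chi_Q$ for the larger cube $Q$. The local testing hypothesis \eqref{e:local-test} then supplies $\int_Q |T_\mu \chi_Q|^2 \, d\nu \le K_\chi \mu(Q)$, and a Carleson embedding argument converts the stopping sums into $\sqrt{K_\chi}$ times the $L^2(\mu)$ and $L^2(\nu)$ norms of $f$ and $g$; the dual paraproduct is treated symmetrically after using \eqref{e:chi} to justify applying $T'$ to $\chi_{\mathbb H} w \chi_Q$. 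The main obstacle I expect is the bookkeeping in this paraproduct step: one must track on which half $I^{\pm}$ each Haar projection lives, so that the coupling of $\mu$- and $\nu$-averages uses the one-sided characteristic $[w]_{\Atwoup}$ and not the strictly larger $[w]_{A_2}$, and one must verify that the Carleson conditions driving the embedding remain finite when the weight is allowed to vanish on a lower half-space $\mathbb H$. Summing the four regimes then produces \eqref{e:form} with the stated dependence on $[w]_{\Atwoup}$, $\sqrt{K_\chi}$, and $\sqrt{K_{\mathrm{WB}}}$.
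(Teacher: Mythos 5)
Your outline correctly reproduces the architecture of the easy part of the argument: random dyadic grids, good/bad cubes, the vanishing of the sum over $J<I$ by the support condition on the kernel, the smoothness/cancellation estimates for separated pairs summed against the one-sided averaging operator to produce $[w]_{\Atwoup}$, and the near-diagonal handled by $K_\chi$ and $K_{\mathrm{WB}}$. This matches the paper through the estimate \eqref{e:offdiag}.

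However, there is a genuine gap in your treatment of the containment regime, which is the actual technical heart of the theorem. You assert that after replacing the Haar function on the larger cube by its constant value on the child containing $J$, ``the local testing hypothesis \eqref{e:local-test} then supplies'' the bound and ``a Carleson embedding argument converts the stopping sums into $\sqrt{K_\chi}$ times the norms.'' This is not true as stated: in the two-weight NTV framework, testing plus the $A_2$-type condition on the pair $(\mu,\nu)$ do \emph{not} by themselves yield the Carleson measure condition needed for the embedding, nor do they control the ``tail'' terms $\ip{T_\mu\chi_{\tilde I\setminus I_{\mathrm{in}}}}{\Delta_J^\nu g}_\nu$ created when one passes from $\chi_{I_{\mathrm{in}}}$ to the indicator of a much larger stopping cube $\tilde I$. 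The paper closes this gap with the \emph{causal pivotal condition} (Lemma \ref{l:pivotal}): the Poisson averages $\mathcal P_{I_\alpha}(\chi_{\mathcal L(I\setminus I_\alpha)}\,d\mu)$, in which only the \emph{lower} components of $I\setminus I_\alpha$ appear because the kernel vanishes for $x<y$, are dominated by the one-sided maximal function $M^\uparrow(\chi_I w^{-1})$, and the sharp bound $\norm{M^\uparrow}_{L^2(w)\to L^2(w)}\lesssim[w]_{\Atwoup}$ of Sawyer and Mart\'in-Reyes--de la Torre then gives the pivotal constant $K\lesssim[w]_{\Atwoup}$. This pivotal condition is what drives the construction of the stopping collection $\mathcal S$ (via \eqref{e:stop}), the $\mu$-packing estimate \eqref{e:pack}, the bound on the stopping term $\Sigma_{\mathrm{stop}}$, and the Carleson conditions for two of the three paraproducts; only one paraproduct is controlled directly by $K_\chi$. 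Without identifying this mechanism --- and in particular without explaining why the one-sided structure restricts the tails to the lower components, so that $M^\uparrow$ rather than the full maximal function suffices and the constant is $[w]_{\Atwoup}$ rather than $[w]_{A_2}$ --- the paraproduct step of your argument does not close.
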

Note that the following intermediate semilocal testing condition also suffices:
    \[ \int_{2Q} \abs{T_\mu \chi_Q}^2 \, d\nu \le K_{\mathrm{sl}} \mu(Q).\]
Clearly $K_\chi \le K_{\mathrm{sl}} \le K_{\mathrm{gl}}$ while a simple size estimate of the kernel from Definition \ref{def:cz} shows that $K_{\mathrm{WB}} \le C + K_{\mathrm{sl}}$.

Henceforth, we will write $\ip{\cdot}{\cdot}_\nu$, $\norm{\cdot}_\mu$, and $\norm{\cdot}_\nu$ for the above inner product and norms. By the translation and scale invariance of the conditions on $T$ and $w$, we may assume that $f$ and $g$ are supported in a cube, say $[\tfrac 14,\tfrac 34]^d$.  Let $\mathcal{D}^{\mu}$ and $\mathcal{D}^{\nu}$
be two dyadic lattices of
$\mathbb R^d$. Let $\mathcal{D}_0$ be a fixed dyadic lattice containing $[0,1]^d$, and set
 	\[ \mathcal{D}^{\mu} =\mathcal{D}_0+\omega_1, \quad \mathcal{D}^{\nu}=\mathcal{D}_0+\omega_2,\] 
where $\omega_1,\omega_2$ have coordinates in $ [-\frac14,\frac14]$. We have a natural probability space of
pairs of such dyadic lattices, $\{(\omega_1,\omega_2) \in [-\tfrac14,\tfrac14]^{2d}\}$ provided with probability equal to the normalized Lebesgue measure on $[-\frac14,\frac14]^{2d}$. Let $I_0^\mu$ and $I_0^\nu$ be the cubes in $\mathcal D^\mu$ and $\mathcal D^\nu$ respectively which contain $[\tfrac 14,\tfrac 34]^d$, the cube in which $f$ and $g$ are both supported. Then, 
	\[ f = f_1 + f_2, \quad f_1= \chi_{I_0^\mu} \int_{I_0^\mu} f \, d\mu, \qquad g = g_1 + g_2, \quad g_1= \chi_{I_0^\nu} \int_{I_0^\nu} g \, d\nu.\]
The component $f_2$ has mean zero (with respect to $d\mu$, where we assume $\mu \left( {I_0^\mu}\right)=1$) and is supported on $I_0^\mu$. Therefore, $f_1 \perp f_2$ and 
	\[ f_2 = \sum_{\substack{I \in \mathcal D^\mu \\ I \subset I_0^\mu}} \Delta^\mu _I f.\]
One can reduce to establishing \eqref{e:form} for $f_2$ and $g_2$. Indeed, by \eqref{e:WB}, for $j =1,2$,
	\[ \begin{aligned}\abs{\ip{T (w^{-1}f_j)}{g_1}_{L^2(w)} } &\le \abs{\int_{I_0^\nu} g w } \norm{f_j}_{L^2(w^{-1})} \norm{T'(w\chi_{I_0^\nu})}_{L^2(w^{-1})} \\
		&\le \sqrt{K_\chi}\norm{g}_{L^2(w)}\norm{f}_{L^2(w^{-1})};\\
	\abs{\ip{T (w^{-1}f_1)}{g_j}_{L^2(w)} } &\le \abs{\int_{I_0^\mu} f w^{-1} } \norm{g_j}_{L^2(w)} \norm{T(w^{-1}\chi_{I_0^\nu})}_{L^2(w} \\
		&\le \sqrt{K_\chi}\norm{g}_{L^2(w)}\norm{f}_{L^2(w^{-1})}  . \end{aligned}\]
Therefore, we now turn to estimating
	\begin{equation}\label{e:Haar} \sum_{\substack{I \in \mathcal D^\mu(I_0^\mu) \\ J \in \mathcal D^\nu(I_0^\nu)}}\ip{T_\mu \Delta^\nu_I f}{\Delta^\nu_J g}_\nu.\end{equation}
Henceforth, we simply write $\mathcal D^\mu$ and $\mathcal D^\nu$ for $\mathcal D^\mu(I_0^\mu)$ and $\mathcal D^\nu(I_0^\nu)$.
\subsection{Good and bad cubes}
Let $\delta= \frac{\ep}{2(d+\ep)}$. Given a positive integer $r$, a cube $J \in \mathcal D^\nu$ is $r$-bad if there exists $I \in \mathcal D^\mu$ such that $\ell(I) \ge 2^r \ell(J)$ and
	\[ \dist(\partial I,J) \le \ell(J)^\delta \ell(I)^{1-\delta}.\]
We will fix $r$ momentarily and henceforth only refer to bad cubes. If $J$ is not bad, then it is good. The main estimate for good cubes $J$ is that for any $I$ with $\ell(I) \ge 2^r \ell(J)$ and $x \in \partial I$,
	\[ \frac{\ell(J)^{\ep}}{\dist(x,J)^{d+\ep}} \le  \frac{\ell(J)^{\ep}}{\ell(J)^{\delta (d+\ep)} \ell(I)^{(1-\delta)(d+\ep)}}= \frac{\ell(J)^{\ep-\delta (d+\ep)}\ell(I)^{\ep}}{\ell(I)^{d+\ep} \ell(I)^{\ep-\delta (d+\ep)}}\]
Since $\delta$ was chosen so that $\ep-\delta (d+\ep) = \frac{\ep}{2}$, we obtain
	\begin{equation}\label{e:GoodEst} \frac{\ell(J)^{\ep}}{\dist(x,J)^{d+\ep}} \le \left( \frac{\ell(J)}{\ell(I)} \right)^{\frac{\ep}{2}} \ell(I)^{-d}, \qquad x \in \partial I. \end{equation}
It is well-known by now (see e.g. \cite{hytonen12}*{Theorem 3.1} for a precise statement) that if $r$ is a fixed absolute constant large enough, then we can restrict our attention in \eqref{e:Haar} to pairs $(I,J)$ where the smaller cube is always good.

\subsection{Initial splitting}
Let us split the sum in \eqref{e:Haar} into four pieces
	\begin{equation}\label{e:split}
	\sum_{\substack{I \in \mathcal D^\mu\\ J \in \mathcal D^\nu}}\ip{T_\mu \Delta_I^\mu f}{\Delta_J^\nu g}_\nu = \sum_{J < I} + \sum_{I < J} + \sum_{I \sim J} + \sum_{\mathrm{hard}}. \end{equation}
The easiest piece to handle is the first sum, which vanishes due to the support of the kernel. The next easiest piece is the diagonal part, where by $I \sim J$ we mean $2^{-r}\ell(J) \le \ell(I) \le 2^r \ell(J)$ and $\dist(I,J) \le \ell(I) + \ell(J)$. In this case, since $\Delta^\mu_J f$ is the sum of $2^d$ characteristic functions, we obtain
	\[ \abs{\ip{T_\mu \Delta_I^\mu f}{\Delta_J^\nu g}_\nu} \le 2^{2d} \sqrt{K_\chi} \norm{\Delta_I^\mu f}_{\mu}\norm{\Delta_J^\nu g}_{\nu}.\]
Then we can sum using orthogonality and finite overlap of the sets $\{ J : I \sim J\}$. The final easy piece is the second sum, which we split into four regions:
	\[ \begin{aligned}&\mathscr I_1 = \{(I,J) : I < J, \,\dist(I,J) \ge \ell(I) + \ell(J), \, \ell(J) \le \ell(I) \}, \\
		&\mathscr I_2 = \{(I,J) : I < J, \,\dist(I,J) \ge \ell(I) + \ell(J), \, \ell(I) \le \ell(J) \}, \\
		&\mathscr I_3 = \{(I,J) : I < J, \,\dist(I,J) \le \ell(I) + \ell(J), \, 2^r \ell(J) \le \ell(I), \, J \, \mathrm{good} \}, \\
		&\mathscr I_4 = \{(I,J) : I < J, \,\dist(I,J) \le \ell(I) + \ell(J), \, 2^r \ell(I) \le \ell(J), \, I \, \mathrm{good} \}. \end{aligned}\] 
The following standard estimate holds for $(I,J) \in \mathscr I :=\cup_{j=1}^4 \mathscr I_j$:
\begin{equation}\label{e:standard}
	\abs{\ip{T_\mu \Delta_I^\mu f}{\Delta_J^\nu g}_\nu} \lesssim \frac{\ell(I)^{\frac \ep 2}\ell(J)^{\frac \ep 2} \mu(I)^{\frac 12}\nu(J)^{\frac 12} }{(\dist(I,J) 
+\ell(I) +\ell(J))^{d+\ep}}\|\Delta_I^{\mu} f\|_{\mu}\|\Delta_J^{\nu}g\|_{\nu}. \end{equation} 
In all cases, we use the smoothness estimate \eqref{e:smooth} in the standard way, relying on the cancellation of $\Delta^\nu_J g$ (for $\mathscr I_1$ and $\mathscr I_3$) and $\Delta^\mu_I f$ (for $\mathscr I_2$ and $\mathscr I_4$). This step crucially relies on $I<J$ so that $y < x$ for all $y \in I$ and $x \in J$; hence, \eqref{e:smooth} can be applied. In $\mathscr I_1$ and $\mathscr I_2$, this immediately yields a stronger estimate than \eqref{e:standard} with $\ell(I)^{\frac \ep 2} \ell(J)^{\frac \ep 2}$ replaced by $\min\{\ell(J),\ell(I)\}^\ep$, but this is immaterial. In $\mathscr I_3$, we additionally rely on the goodness of $J$ to obtain \eqref{e:GoodEst}, so that for $x \in J$ and $y \in I$,
	\[ \abs{K(x,y) - K(c_J,y)} \lesssim \frac{\ell(J)^\ep}{\dist(y,J)^{d+\ep}} \lesssim \left( \frac{\ell(J)}{\ell(I)} \right)^{\frac{\ep}{2}} \ell(I)^{-d}.\]
This establishes \eqref{e:standard} for $(I,J)$ in $\mathscr I_3$, and $\mathscr I_4$ is handled symmetrically. 

Our next step is to show that, using \eqref{e:standard},
	\begin{equation}\label{e:offdiag} \abs{ \sum_{(I,J) \in \mathscr I} \ip{T_\mu \Delta_I^\mu f}{\Delta_J^\nu g}_\nu } \lesssim [\mu,\nu]_{\Atwoup} \norm{f}_\mu \norm{g}_\nu,\end{equation}
where $[\mu,\nu]_{\Atwoup} = \sup_Q \mu(Q^-) \nu(Q^+)/ |Q|^2 = [w]_{\Atwoup}$.
To do so, we use the following lemma whose proof is postponed to the end of this subsection.
\begin{lemma}\label{l:avg}
For any $\vep>0$, $k, n \in \mathbb Z$, and nonnegative scalars $a_I$,
	\[ \sum_{\ell(J)=2^k} \nu(J) \left( \sum_{\substack{I < J \\ \ell(I)=2^{-n+k}}}  \mu(I)^{\frac 12} \frac{2^{k \vep}}{(\dist(I,J)+2^k)^{d+\vep}} a_I \right)^2 \lesssim [\mu,\nu]_{\Atwoup} \sum a_I^2 .\]
\end{lemma}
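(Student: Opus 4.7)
The plan is to apply Cauchy--Schwarz to the inner sum to reduce to a one-sided ``Poisson testing'' estimate, then to prove that estimate by a dyadic decomposition in the distance between $I$ and $J$ coupled with the $\Atwoup$ characteristic.

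First, writing $K(I,J) := 2^{k\vep}/(\dist(I,J)+2^k)^{d+\vep}$ and Cauchy--Schwarzing the inner sum with the symmetric factorization $\mu(I)^{1/2} K(I,J) a_I = (\mu(I) K(I,J))^{1/2}(K(I,J) a_I^2)^{1/2}$ yields
\[ \Bigl(\sum_I \mu(I)^{1/2} K(I,J) a_I\Bigr)^2 \le P(J) \sum_I K(I,J) a_I^2,\qquad P(J):=\sum_{I<J,\,\ell(I)=2^{k-n}} \mu(I) K(I,J).\]
Substituting into the outer sum and swapping the orders of summation reduces the lemma to the uniform bound
\[ \sum_{J>I,\,\ell(J)=2^k}\nu(J) P(J) K(I,J)\,\lesssim\, [\mu,\nu]_{\Atwoup}\qquad\forall I.\tag{$\ast$}\]

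To establish $(\ast)$, I would dyadically decompose both the outer kernel and the inner sum defining $P(J)$ by the distance generations $\dist(I,J)+2^k\sim 2^{k+g}$ and $\dist(I',J)+2^k\sim 2^{k+g'}$ respectively (with $I'$ the summation variable in $P(J)$). For each triple $(I,I',J)$ in generations $(g,g')$ one constructs a single test cube $Q$ of side length $\sim 2^{k+\max(g,g')}$ with $I,I'\subset Q^-$ and $J\subset Q^+$, and invokes the $\Atwoup$ characteristic $\mu(I')\nu(J)\le \mu(Q^-)\nu(Q^+)\le [\mu,\nu]_{\Atwoup}|Q|^2$. Combined with the volume counts --- in $d=1$ one has $\sim 2^g$ many $J$'s at generation $g$ above $I$ and $\sim 2^{n+g'}$ many $I'$'s at generation $g'$ below each such $J$ --- and the $\vep$-decay factors $K_gK_{g'}$ of the two Poisson kernels, the resulting double series in $(g,g')$ should collapse to a convergent bound by $[\mu,\nu]_{\Atwoup}$.

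The principal obstacle is that a naive single-generation application of the $\Atwoup$ condition wastes a factor $\nu(Q^+)/\nu(J)\sim 2^{\max(g,g')d}$, and leaves a divergent ratio $2^{d-\vep}$ in the sum. The fix is to couple the two decompositions through the cube at the \emph{larger} of the two scales so that the combined decay $2^{-(g+g')\vep}$ from the two kernels absorbs the excess factor (by symmetry the cases $g'\le g$ and $g\le g'$ are handled dually). Once the bookkeeping is made precise --- this is essentially the one-sided analogue of the NTV Poisson lemma, with the crucial observation that the constraint $I,I'<J$ aligns perfectly with the $\Atwoup$ structure ``lower $\times$ upper'' so that the same $Q$ serves both sides --- the resulting convergent geometric series yields $(\ast)$ with a constant depending only on $d$ and $\vep$.
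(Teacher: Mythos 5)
Your very first step already loses too much: the symmetric Cauchy--Schwarz factorization decouples $a_I$ from the location of the $\mu$-mass, and the resulting target inequality $(\ast)$ is \emph{false}. Here is a concrete counterexample in the setting $d=1$, $k=n=0$, $\vep\in(0,1)$. Take $w=N$ on $[M,M+1)$, $w=N^{-1}$ on $[-M',-M'+1)$, and $w=1$ elsewhere, with $1\ll M\ll M'$ and $N\ge (M+M')^2$; then $d\nu=w\,dx$, $d\mu=w^{-1}\,dx$ and one checks $[\mu,\nu]_{\Atwoup}\sim N^2/(M+M')^2$, since any cube whose lower half meets the $\mu$-spike and whose upper half meets the $\nu$-spike has side length $\gtrsim M+M'$. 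Now test $(\ast)$ with $I=[0,1)$: the single pair $J=[M,M+1)$, $I'=[-M',-M'+1)$ contributes
\[
\nu(J)\,\mu(I')\,K(I,J)\,K(I',J)\;\sim\;\frac{N^2}{M^{1+\vep}\,(M+M')^{1+\vep}}\;\sim\;[\mu,\nu]_{\Atwoup}\cdot\frac{(M+M')^{1-\vep}}{M^{1+\vep}},
\]
which blows up as $M'\to\infty$ with $M$ fixed. Note this is a \emph{single term}, so no rearrangement of the $(g,g')$ bookkeeping, and in particular no ``coupling through the cube at the larger scale,'' can rescue $(\ast)$: the problem is that $K(I,J)$ decays only in the distance from $I$ to $J$, whereas the $\Atwoup$ condition applied to the pair $(I',J)$ forces a cube of the much larger scale $\dist(I',J)$, and there is no decay left to pay for the ratio of scales. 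The divergent factor $2^{(1-\vep)g'}$ you flagged is not a bookkeeping artifact but a symptom that $(\ast)$ is simply not a consequence of $A_2^{\uparrow}$. (The lemma itself survives in this example because the true left-hand side carries $\mu(I')^{1/2}a_{I'}$ rather than $\mu(I')$, i.e.\ the $\mu$-mass enters only through an $L^2(\mu)$ pairing.)

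The paper avoids this by never separating $a_I$ from $\mu(I)^{1/2}$: it sets $\phi=\sum_I a_I\mu(I)^{-\frac12}\chi_I$, so that $\norm{\phi}_{L^2(\mu)}^2=\sum a_I^2$, observes that for $x\in J$ the inner sum is dominated by $2^{k\vep}\sum_{j\ge k}2^{-j\vep}A_{2^j}\phi(x)$ where $A_h f(x)=h^{-d}\int_{Q(x,h)^-}|f|\,d\mu$ is a one-sided averaging operator, and then proves the single-scale operator bound $\norm{A_h}_{L^2(\mu)\to L^2(\nu)}\le 2^d[\mu,\nu]_{\Atwoup}$ by a Whitney-type covering of the region $\{(x,y):x\in Q(0,1)^+,\ y\in Q(x,1)^-\}$ by products $Q_j^+\times Q_j^-$ with $\sum_j|Q_j|^2\le 2^d$. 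Minkowski's inequality and the geometric series in $j$ then finish the proof. If you want to salvage a Schur-test style argument, you must test against functions adapted to $\mu$ and $\nu$ rather than constants; as written, your reduction cannot be repaired.
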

We can directly estimate $\mathscr I_1$ and $\mathscr I_3$ by applying Lemma \ref{l:avg} as follows.
	\[ \begin{aligned} &\sum_{(I,J) \in \mathscr I_1 \cup \mathscr I_3} \abs{\ip{T_\mu \Delta_I^\mu f}{\Delta_J^\nu g}_\nu } \lesssim \sum_{n=0}^\infty 2^{-\frac {n\ep}{2}} \sum_{k \in \mathbb Z} \Sigma_{n,k}^{\mathrm{o}}, \\
	&\Sigma_{n,k}^{\mathrm{o}} = \sum_{\ell(J)=2^k} \sum_{\substack{I < J \\ \ell(I)=2^{-n+k}}} \frac{2^{k\ep}}{(\dist(I,J) + 2^k)^{d+\ep}}\mu(I)^{1/2}\nu(J)^{1/2}\|\Delta_I^{\mu}f\|_{\mu}\|\Delta_J^{\nu}g\|_{\nu} . \end{aligned}\]
Cauchy-Schwarz and Lemma \ref{l:avg} show that 
	\[ \abs{\Sigma_{n,k} } \lesssim [\mu,\nu]_{\Atwoup} \norm{\mathbb P_{\mathcal D^\mu_{-n+k} }^\mu f}_\mu \norm{\mathbb P_{\mathcal D^\nu_k}^\nu g}_\nu.\] 
Summing in $n$ and $k$ using Cauchy-Schwarz and mutual orthogonality of the projections $\mathbb P^\lambda_{\mathcal D_j}$ establishes \eqref{e:offdiag} for $(I,J) \in \mathscr I_1 \cup \mathscr I_3$. The other case, $(I,J) \in \mathscr I_2 \cup \mathscr I_4$ is handled in the same way, though one needs Lemma \ref{l:avg} with the condition $I<J$ replaced by $I>J$ and $[\mu,\nu]_{\Atwoup}$ replaced by $[\mu,\nu]_{\Atwodown}$. The proof is the same, except $A_h$ is defined by integrating from $x$ to $x+h$. So we consider \eqref{e:offdiag} proved and conclude the easy part with the proof of Lemma \ref{l:avg}.

\subsubsection{Proof of Lemma \ref{l:avg}}
First, for any $x \in \mathbb R^d$ and $h>0$, let $Q(x,h)$ be the cube centered at $x$ with side length $h$. Then define
	\[ A_hf(x) = \frac{1}{h^d} \int_{Q(x,h)^-} |f| \, d\mu.\]
Assume for now that $\norm{A_h}_{L^2(\mathbb R^d,\mu) \to L^2(\mathbb R^d,\nu)} \le 2^d [\mu,\nu]_{\Atwoup}$. Define
	\[ b_J = \sum_{\substack{I < J \\ \ell(I)=2^{-n+k}}}  \mu(I)^{\frac 12} \frac{2^{k \vep}}{(\dist(I,J)+2^k)^{d+\vep}} a_I, \qquad \phi = \sum_{I} a_I \mu(I)^{-\frac 12} \chi_I.\]
Expand into dyadic annuli $\dist(I,J) \sim 2^j$ for $j \ge k$ so that for any $x \in J$,
	\[ b_J \lesssim 2^{k\vep} \sum_{j=k}^\infty 2^{-j(d+\vep)} \sum_{\substack{I < J \\ \dist(I,J) \sim 2^j \\ \ell(I)=2^{-n+k}}} \mu(I)^{\frac 12} a_I \lesssim 2^{k\vep} \sum_{j=k}^\infty 2^{-j\vep} A_{2^j}\phi(x).\]
Therefore, for each $J$,
	\[ \nu(J)b_J^2 \lesssim \int_J \left( 2^{k\vep} \sum_{j=k}^\infty 2^{-j\vep} A_{2^j}\phi(x) \right)^2 \, d\nu(x).\]
Summing over $J$ (which are disjoint) and using Minkowski's inequality, we obtain
	\[ \left( \sum_{\ell(J)=2^k} \nu(J) b_J^2 \right)^{\frac 12} \lesssim 2^{k\vep} \norm{ \sum_{j=k}^\infty 2^{-j\vep} A_{2^j}\phi }_{\nu} \lesssim [\mu,\nu]_{\Atwoup}^{\frac  12} \norm{\phi}_{\mu}.\] Computing the norm of $\phi$ concludes the proof, yet the norm of $A_h$ must be established. Standard reductions (H\"older's inequality, Fubini's theorem, and rescaling) show that it is enough to establish
	\[ \int_{Q(0,1)^+} \int_{Q(x,1)^-} d \mu(y) d \nu(x) \le 2^d [\mu,\nu]_{\Atwoup}.\]
The best proof (when $d=1$) is the following picture. 

\begin{center}
\begin{tikzpicture}[scale=3, line cap=round, line join=round]
\fill[gray!65] (0,-1) -- (1,0) -- (1,-1) -- cycle;
\fill[gray!20] (0,0) -- (1,1) -- (1,0) -- (0,-1) -- cycle;
\draw[very thick] (0.5,0) rectangle (1,0.5);
\draw[very thick] (0.25,0) rectangle (0.5,0.25);
\draw[very thick] (0.125,0) rectangle (0.25,0.125);
\draw[very thick] (0.25+0.125,0.25) rectangle (0.5,0.25+0.125);
\draw[very thick] (0.75,0.5) rectangle (1,0.75);
\draw[very thick] (0.5+0.125,0.5) rectangle (0.75,0.5+0.125);
\draw[very thick] (0.75+0.125,0.75) rectangle (1,0.75+0.125);
\draw[->,thick] (0,-1.05) -- (0,1.24) node[above left=-2pt] {$y$};
\draw[->,thick] (0,0) -- (1.24,0) node[below right=-2pt] {$x$};
\draw (1,0.025) -- (1,-0.025) node[below=2pt] {$1$};
\draw (0.025,1) -- (-0.025,1) node[left=2pt] {$1$};
\end{tikzpicture}
\end{center}

We are integrating over the light gray region, and we know that for every square $A$ in the picture,
	\begin{equation}\label{e:A} \iint_A d\mu d\nu \le |A| [\mu,\nu]_{\Atwoup}.\end{equation}
Therefore, summing over the squares establishes the claim. In higher dimensions, this same idea works, the key observation again being that we can cover the region of integration $\{(x,y) : x \in Q(0,1)^+, \ y \in Q(x,1)^-\}$ by $Q_j^+ \times Q_j^-$ with $\sum |Q_j|^2 \le 2^d$.
\qed

\section{Hard part in dimension one}
We carry out the remainder of the proof in dimension one. We do this because of the following reason: the hard sum from \eqref{e:split} consists of all pairs $(I,J)$ such that neither $I<J$ nor $J<I$ nor $I\sim J$. In one dimension, as emphasized in the Preliminaries section, if two intervals fail to satisfy $I<J$ and $J<I$, then the intervals must overlap. This forced overlap collapses the geometry of the hard term to a tractable configuration that can be treated by the stopping-time and paraproduct machinery developed below, whereas in higher dimensions, non-comparable cubes can avoid one another entirely. Moreover, our argument depends critically on the sharp weighted bounds for the one-sided maximal operator established in \cites{sawyer-one-sided,martin-reyes-sharp}, which are currently available only in the one-dimensional setting. 

Since we are excluding $I\sim J$, we are left with the following two cases:
	\[ \begin{aligned} &\mathscr I_5 =  \{ (I,J) : I \cap J \ne \varnothing, \ 2^r \ell(J) \le \ell(I), \, J \, \mathrm{good} \}, \\
		&\mathscr I_6 =  \{ (I,J) : I \cap J \ne \varnothing, \ 2^r\ell(I) \le \ell(J) , \, I \, \mathrm{good}\}. \end{aligned} \]
We will only handle $\mathscr I_5$ and $\mathscr I_6$ is treated symmetrically. For $(I,J) \in \mathscr I_5$, since $J$ is good, $J$ must be completely contained in one child of $I$, call this cube $I_{\mathrm{in}}$ and $I_{\mathrm{nb}}= I \setminus I_{\mathrm{in}}$. We first split
	\[ \ip{T_\mu \Delta_I^\mu f}{\Delta_J^\nu g}_\nu = \ip{T_{\mu}(\chi_{I_{\mathrm{nb}}}\Delta_I^{\mu} f)}{ \Delta_J^{\nu}g}_{\nu} +
\ip{T_{\mu}(\chi_{I_{\mathrm{in}}}\Delta_I^{\mu} f)}{\Delta_J^{\nu}g}_{\nu}.\]
The first term is estimated exactly as in the easy section, so the final one remains. $\Delta_I^\mu f$ is in fact constant on $I_{\mathrm{in}}$ and hence it equals its average there. Let $\tilde I$ be a cube in $\mathcal D^\mu$ containing $I$, which will be specified later. We will split
	\begin{equation}\label{e:DS}\ip{T_{\mu}(\chi_{I_{\mathrm{in}}}) \Delta_I^{\mu} f)}{\Delta_J^{\nu}g}_{\nu} = \avg{ \Delta_I^{\mu} f}_{\mu,I_{\mathrm{in}}} \ip{T_{\mu}\chi_{\tilde I}}{\Delta_J^{\nu}g}_{\nu} -\avg{ \Delta_I^{\mu} f}_{\mu,I_{\mathrm{in}}} \ip{T_{\mu}\chi_{\tilde I \setminus I_{\mathrm{in}}}}{\Delta_J^{\nu}g}_{\nu}.\end{equation}

\subsection{Causal pivotal condition}
Given a measure $d\lambda$ and a cube $I$, we introduce the Poisson average of $\lambda$ at $I$,
$$
\mathcal P_I (d \lambda) := \int \frac{\ell(I)^{\ep }}{(\ell(I) +|c(I)-x|)^{d+\ep }} \,d\lambda(x)\,.
$$
Here $c(I)$ denotes the center of the cube $I$, we will often take the measure to be $\chi_{A} \, d\mu$. Given two intervals $J_1=[a,d)$ and $J_2=[b,c)$ with $a<b<c<d$, define the lower and upper components by
\begin{equation}
\label{e:LR}
\mathcal L(J_1 \setminus J_2) = [a,b) , \qquad \mathcal U(J_1 \setminus J_2) = [c,d),
\end{equation}
with the usual understanding that $[a,b)=\varnothing$ if $a=b$.

In the causal pivotal condition below, only the lower portions $\mathcal L(I\setminus I_{\alpha})$ of $I \setminus I_\alpha$ contribute; these are the components of $I \setminus I_\alpha$ that lie below $I_\alpha$.

\begin{lemma}
\label{l:pivotal}
For any cube $I\in \mathcal D^{\mu}$ and any collection of disjoint open cubes $\{I_{\alpha}\}_\alpha$,
\begin{equation}
\label{e:pivotal}
\sum_{\alpha} [\mathcal P_{I_{\alpha}}(\chi_{\mathcal L(I\setminus I_{\alpha})}d\mu)]^2\nu(I_{\alpha}) \lesssim [w]_{\Atwoup}^2 \mu(I).
\end{equation}
\end{lemma}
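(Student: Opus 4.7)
The plan is to dominate $\mathcal P_{I_\alpha}$ pointwise by a backward one-sided maximal function and then invoke the sharp one-weight bound for this maximal operator. Introduce
\[ M^-\phi(y) = \sup_{h>0}\frac{1}{h}\int_{y-h}^{y}|\phi(t)|\,dt. \]
The first step is to establish, for every $y \in I_\alpha$,
\begin{equation}\label{e:ptwise}
\mathcal P_{I_\alpha}(\chi_{\mathcal L(I\setminus I_\alpha)}\,d\mu) \lesssim M^-(\chi_I w^{-1})(y).
\end{equation}
In dimension one $\mathcal L(I\setminus I_\alpha)$ is the single left-tail interval $[a,b_\alpha)$, where $I=[a,d)$ and $I_\alpha=[b_\alpha,c_\alpha)$. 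Decompose it dyadically into slabs $E_k = \{x : c(I_\alpha)-x \sim 2^k\ell(I_\alpha)\}$, on which the Poisson kernel is comparable to $2^{-k(1+\ep)}\ell(I_\alpha)^{-1}$. For each $y \in I_\alpha$ and each $k \ge 0$, the window $[y-h_k,y)$ with $h_k = y - c(I_\alpha) + 2^k\ell(I_\alpha) \sim 2^k\ell(I_\alpha)$ contains $E_k \subset I$, so
\[ \frac{\mu(E_k)}{2^k\ell(I_\alpha)} \lesssim \frac{1}{h_k}\int_{y-h_k}^{y}\chi_I(t) w^{-1}(t)\,dt \le M^-(\chi_I w^{-1})(y). \]
Summing the resulting geometric series $\sum_k 2^{-k\ep}$ yields \eqref{e:ptwise}.

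In the second step, squaring \eqref{e:ptwise}, integrating over $I_\alpha$ against $d\nu$, and using pairwise disjointness of the $I_\alpha$ gives
\[ \sum_\alpha [\mathcal P_{I_\alpha}(\chi_{\mathcal L(I\setminus I_\alpha)}\,d\mu)]^2\,\nu(I_\alpha) \lesssim \sum_{\alpha}\int_{I_\alpha}[M^-(\chi_I w^{-1})]^2\,d\nu \le \int [M^-(\chi_I w^{-1})]^2\,w\,dx. \]
The third step invokes the sharp one-weight bound
\[ \|M^-\|_{L^2(w) \to L^2(w)} \lesssim [w]_{\Atwoup} \]
of Mart\'in-Reyes (see \cite{martin-reyes-sharp}) applied to $\phi = \chi_I w^{-1}$, so that the right-hand side above is bounded by $[w]_{\Atwoup}^2 \int_I w^{-1} = [w]_{\Atwoup}^2 \mu(I)$, which is \eqref{e:pivotal}.

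The main obstacle is the sharp linear-in-$[w]_{\Atwoup}$ bound for $M^-$ in step three; this is exactly the ingredient that confines the whole argument to dimension one, since no analogous sharp bound for the one-sided maximal operator is available in higher dimensions. The pointwise comparison in step one is a routine dyadic decomposition relying on the special structure of $\mathcal L(I\setminus I_\alpha)$ as a single left-tail interval, which is a feature of one dimension and of the causal setup.
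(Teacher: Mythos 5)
Your proof is correct and follows essentially the same route as the paper: a pointwise domination of the Poisson average by the backward one-sided maximal function via dyadic annuli, disjointness of the $I_\alpha$, and the sharp $L^2(w)$ bound $\lesssim [w]_{\Atwoup}$ for that maximal operator from \cite{sawyer-one-sided} and \cite{martin-reyes-sharp}. No substantive differences.
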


\begin{proof}
We will use the upward-mapping maximal function
	\[ M^{\uparrow}f(x) = \sup_{h>0} \frac{1}{h} \int_{x-h}^x |f(t)| \,dt .\]
In \cite{sawyer-one-sided, martin-reyes-sharp}, it is shown that 
	\begin{equation}\label{e:sawyer}\norm{M^{\uparrow}}_{L^2(\mathbb R, w) \to L^2(\mathbb R,w)} \lesssim [w]_{\Atwoup}. \end{equation}
Now, it is easy to see that for any $y \in I_{\alpha}$,
$$
\mathcal P_{I_\alpha}(\chi_{\mathcal L(I\setminus I_{\alpha})}d\mu) \lesssim \sum_{k=0}^\infty \frac{2^{-k \ep}}{ [2^{k}\ell(I_{\alpha})]^d} \int_{|x-c(I_\alpha)| \sim 2^k \ell(I_\alpha)} \chi_{\mathcal L(I \setminus I_\alpha)}(x) w^{-1}(x) \, dx.
$$
For each $k$, the integration region can be dominated by $y-h < x < y$ where $h \sim 2^k \ell(I_{\alpha})$, so after summing in $k$,
$$
\mathcal P_{I_{\alpha}}(\chi_{I\setminus I_{\alpha}}d\mu) \le c\,\inf_{y\in I_{\alpha}} M^\uparrow(\chi_{\mathcal L(I\setminus I_{\alpha})} w^{-1})(y)\,.
$$
Next, using the trivial bound $M^\uparrow(\chi_{\mathcal L(I\setminus I_{\alpha})} w^{-1}) \le M^\uparrow(\chi_{I} w^{-1})$ together with the disjointness of $I_{\alpha}$,
$$
\sum_{\alpha} [\mathcal P_{I_{\alpha}}(\chi_{\mathcal L(I\setminus I_{\alpha})}d\mu)]^2\nu(I_{\alpha}) \lesssim\,\int_I \abs{ M^\uparrow (\chi_I w^{-1}) }^2(x) w(x)\, dx.
$$
The desired estimate \eqref{e:pivotal} now follows by \eqref{e:sawyer}.
\end{proof}

\subsection{Sparse collection}
Given a cube $S$, let $\mathcal S(S)$ be the maximal collection of subcubes $Q$ such that
\begin{equation}
\label{e:stop}
\left[ \mathcal P_{Q}(\chi_{\mathcal L(S\setminus Q)}\,d\mu)\right]^2 \nu(Q) \ge 100\,K\, \mu(Q),
\end{equation}
where $K \lesssim [w]_{\Atwoup}$ is the constant in \eqref{e:pivotal}. By Lemma \ref{l:pivotal}, 
	\begin{equation}\label{e:sparse} \sum_{Q \in \mathcal S(S)} \mu(Q) \le \frac 12 \mu(S).\end{equation}
Iterate this process by starting with $\mathcal S_0=\mathcal S(I_0^\mu)$ and for each $k \ge 0$, define
	\[ \mathcal S_{k+1} = \{ S \in \mathcal S(Q): Q \in \mathcal S_k\}.\]
Take now $\mathcal S = \cup_k \mathcal S_k$. For any $I \in \mathcal S$, there exists a unique $k_0$ such that $I \in \mathcal S_{k_0}$. Then, by \eqref{e:sparse}, for each $k>k_0$,
	\[ \sum_{Q \in \mathcal S_k: Q \subset I} \mu(Q) = \sum_{R \in \mathcal S_{k-1}: R \subset I} \sum_{Q \in \mathcal S(R)} \mu(Q) \le \frac 12 \sum_{R \in \mathcal S_{k-1}: R \subset I} \mu(R),\]
and when $k-1=k_0$, the final sum is exactly $\mu(I)$ since $\mathcal S_k$ is always a collection of disjoint cubes.
Iterating this estimate, we obtain
	\begin{equation}\label{e:pack} \sum_{Q \in \mathcal S : Q\subset I} \mu(Q) = \sum_{k=0}^\infty \sum_{Q \in \mathcal S_{k_0+k}} \mu(Q) \le \sum_{k=0}^\infty \left( \frac 12 \right)^k \mu(I) = 2 \mu(I).\end{equation}
To decompose the difficult term, we further introduce the notation for $S \in \mathcal S$, 
	\[ \mathcal O_S = \{I \in \mathcal D^\mu : I \subset S, I \not\subset R \mbox{ for any } R \in \mathcal S(S)\}.\] 
Notice that $\{\mathcal O_S\}_{S \in \mathcal S}$ forms a partition of $\mathcal D^\mu$.
Given $S \in \mathcal S$, let $\hat S$ denote the unique cube in $\mathcal S$ such that $S \in \mathcal S(\hat S)$.
Given $(I,J) \in \mathscr I_5$, define $\tilde I$ in \eqref{e:DS} to be the unique $S \in \mathcal S$ such that $I_i \in \mathcal O_S$. In this way, it remains to estimate the stopping term
	\[ \Sigma_{\mathrm{stop}} = \sum_{(I,J) \in \mathscr I_5} \avg{ \Delta_I^{\mu} f}_{\mu,I_{\mathrm{in}}} \ip{T_{\mu}\chi_{\tilde I \setminus I_{\mathrm{in}}}}{\Delta_J^{\nu}g}_{\nu} \]
and the difficult term
	\[ \Sigma_{\mathrm{diff}} = \sum_{S \in \mathcal S} \sum_{\substack{(I,J) \in \mathscr I_5 \\ I_{\mathrm{in}} \in \mathcal O_S}} \avg{ \Delta_I^{\mu} f}_{\mu,I_{\mathrm{in}}} \ip{T_{\mu}\chi_{S}}{\Delta_J^{\nu}g}_{\nu}.\]
The stopping term is estimated in Proposition \ref{p:stop} below. Let us decompose the difficult term now as
	\[ \Sigma_{\mathrm{diff}} = \mathrm{D}_1 + \mathrm{D}_2 + \mathrm{D}_3,\]
where $\mathrm{D}_1$ consists of all $(I,J) \in \mathscr I_5$ $J$ and $I$ belong to the same $\mathcal O_S$. When we say $J\in \mathcal O_S$, we mean $\mathcal O_S$ defined with $\mathcal D^\nu$ replacing $\mathcal D^\mu$. This is the first paraproduct, $\pi^{(1)}$ below. $\mathrm{D}_2$ consists of all pairs $J \subset I$ with $J$ and $I_{\mathrm{in}}$ belonging to the same $\mathcal O_S$ but $I \in \mathcal O_{\hat S}$. This forces $I_{\mathrm{in}}=S$ and therefore,
	\[ \mathrm{D}_2 = \sum_{S \in \mathcal S} \sum_{J \in \mathcal O_S} ( \Delta^\nu_J g, T_\mu \chi_{S} ) \langle \Delta^\mu_{F(S)}f \rangle_{\mu,S}, \] 
where we are using the notation $F(I)$ to denote the father of $I$ in $\mathcal D^\mu$.
We will use this identity multiple times
	\begin{equation}\label{e:tele} \langle \Delta^\mu_{F(I)}f \rangle_{\mu,I} = \langle f \rangle_{\mu,F(I)} - \langle  f \rangle_{\mu,I}.\end{equation}
The remaining $(I,J)$ contributing to $\mathrm{D}_3$ satisfy $J \in \mathcal O_{S'}$ and $I_{\mathrm{in}} \in \mathcal O_S$ for some $S,S' \in \mathcal S$ such that $S' \subsetneq S$. Furthermore, we can trace the family tree $\mathcal S$ from $S'$ back to $S$ and then all the way back to $I_0^\mu$ by constructing $\{S_j\}_{j=1}^{j_0+1}$ such that
	\[ S'=S_{j_0+1} \subset S_{j_0} \subset \cdots \subset S_2\subset S_1 = I_0^\mu, \quad S_{j_1}=S, \quad S_{j} \in \mathcal S(S_j).\]
Exchanging the sums and relabelling $I_{\mathrm{in}}=I$, we have
	\[ \mathrm{D}_3 = \sum_{S' \in \mathcal S} \sum_{j=1}^{j_0} \sum_{J \in \mathcal O_{S'}} \langle \Delta^\nu_J g,T_\mu \chi_{S_j} \rangle \sum_{I \in \mathcal O_{S_j}, J \subset I} \langle \Delta^\mu_{F(I)} f \rangle_{I}.\]
By telescoping \eqref{e:tele}, for each $j$, the last term is 
	\[ \sum_{I \in \mathcal O_{S_j}, J \subset I} \langle \Delta^\mu_{F(I)} f \rangle_{I} = \langle f \rangle_{\mu,F(S_{j})}-\langle f \rangle_{\mu,F(S_{j+1})}.\]
Then, since $S_{j+1} \subset S_{j}$,
	\[ \sum_{j=1}^{j_0} \chi_{S_j}(\langle f \rangle_{\mu,F(S_{j})}-\langle f \rangle_{\mu,F(S_{j+1})}) = \chi_{S_{j_0}}(\langle f \rangle_{\mu,F(S_{1})}-\langle f \rangle_{\mu,F(S_{j_0+1})}) \]
	\[ +  \sum_{j=1}^{j_0-1}\sum_{k=j}^{j_0-1} \chi_{S_k \setminus S_{k+1}}(\langle f \rangle_{\mu,F(S_{j})}-\langle f \rangle_{\mu,F(S_{j+1})})  \]
Exchanging the sums, and recalling that $\langle f \rangle_{\mu,S_{1}}=0$, the final term equals
	\[ \sum_{k=1}^{j_0-1} \chi_{S_k \setminus S_{k+1}} \sum_{j=1}^{k} (\langle f \rangle_{\mu,F(S_{j})}-\langle f \rangle_{\mu,F(S_{j+1}})) = -\sum_{k=1}^{j_0-1} \chi_{S_k \setminus S_{k+1}}\langle f \rangle_{\mu,F(S_{k+1})}.\]
Therefore,
$\mathrm{D}_3= -\mathrm{D}_4 - \mathrm{D}_5$, where after relabeling and reorganizing,
	\[ \begin{aligned}&\mathrm{D}_4 = \sum_{S \in \mathcal S} \langle \mathbb P^\nu_{\mathcal O_S} g, T_\mu \chi_{\hat {S}} \rangle_\nu \langle f \rangle_{\mu,{F(S)}}, \\
	&\mathrm{D}_5 = \sum_{S \in \mathcal S} \sum_{Q \in \mathcal S(S)} \langle \mathbb P^\nu_{\mathcal D(Q)} g, T_\mu \chi_{\hat S \setminus S} \rangle_\nu \langle f \rangle_{\mu,{F(S)}}. \end{aligned}\]
Since $\mathcal D^\nu(S) = \mathcal O^\nu_S \cup (\cup_{Q  \in \mathcal S(S)} \mathcal D^\nu(Q) )$, splitting $\chi_{\hat S} = \chi_S + \chi_{\hat S \setminus S}$, we further rearrange so that $\mathrm{D}_4 + \mathrm{D}_5=\mathrm{D}_6 + \mathrm{D}_7$, where
	\[  \mathrm{D}_6 = \sum_{S \in \mathcal S} \langle \mathbb P^\nu_{\mathcal O_S} g, T_\mu \chi_{S} \rangle_\nu \langle f \rangle_{\mu,{F(S)}}, \quad \mathrm{D}_7= \sum_{S \in \mathcal S} \langle \mathbb P^\nu_{\mathcal D(S)} g, T_\mu \chi_{\hat S \setminus S} \rangle_\nu \langle f \rangle_{\mu,{F(S)}}.\]
Finally, 
	\[ \mathrm{D_2} - \mathrm{D}_6 = -\sum_{S \in \mathcal S} \langle \mathbb P^\nu_{\mathcal O_S} g, T_\mu \chi_{\hat {S}} \rangle_{\nu} \langle f \rangle_{\mu,S}, \]
and also,
	\[ \Sigma_{\mathrm{diff}} = \mathrm{D}_1 + (\mathrm{D_2} - \mathrm{D_6}) - \mathrm{D_7} =: \sum_{i=1}^3 \ip{\pi^{(i)}f}{g}_{\nu},\]
and these are the three paraproduct forms. 

\subsection{Proofs of easy paraproduct bounds}
Here we give the estimates for $\Sigma_{\mathrm{stop}}$ and the first two paraproducts. The simplest is $\pi^{(2)}f$, which is controlled by conditions \eqref{e:pack} and $K_\chi$. Indeed, by the orthogonality of $\mathbb P^\nu_{\mathcal O_S}$ and the fact that $\mathbb P^\nu_{\mathcal O_S} g$ is supported in $S$, 
	\[ \Vert \pi^{(2)}f \Vert_{L^2(d\nu)}^2 \lesssim \sum_{S \in \mathcal S} b_S \langle f \rangle_{\mu,{S}}^2  , \qquad b_S = \left\Vert T_\mu \chi_{S} \right\Vert_{L^2(S,d\nu)}^2.\]
By \eqref{e:local-test}, $b_S \le K_{\chi} \mu(S)$ and thus the Carleson condition follows from \eqref{e:pack}. 
To estimate $\Sigma_{\mathrm{stop}}$ and $\pi^{(1)}$, we connect to the pivotal condition through the following lemma.
\begin{lemma}\label{l:poisson} Let $(I,J) \in \mathscr I_5$ and $S \in \mathcal D^\mu$ such that $I\subset S$. Then,
	\[ \abs{\ip{ T_{\mu}\chi_{S\setminus I})}{\Delta_J^{\nu}g}_{\nu}} \lesssim \nu(J)^{\frac 12}\norm{\Delta_J^{\nu}g}_{\nu}\left (\frac{\ell(J)}{\ell(I)}\right)^{\ep/2} \mathcal P_{I_{\mathrm{in}}}(\chi_{\mathcal L(S\setminus I)} \,d\mu).\]
\end{lemma}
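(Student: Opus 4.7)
The plan is to reduce $\ip{T_\mu\chi_{S\setminus I}}{\Delta_J^\nu g}_\nu$ to the Poisson average of $\chi_{\mathcal L(S\setminus I)}\,d\mu$ at $I_{\mathrm{in}}$, using in turn the one-sided kernel support, the $\nu$-mean-zero property of $\Delta_J^\nu g$, the kernel smoothness, and the goodness of $J$.

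First, since $K(x,y)=0$ for $x<y$ and every $y\in \mathcal U(S\setminus I)$ lies strictly above $I\supset J$, we have $T_\mu(\chi_{\mathcal U(S\setminus I)})(x)=0$ for $x\in J$, so only $T_\mu(\chi_{\mathcal L(S\setminus I)})$ contributes. Since $c_J\in J\subset I$ lies outside $\mathcal L(S\setminus I)$ and $\Delta_J^\nu g$ has $\nu$-mean zero on $J$, subtracting the constant $T_\mu(\chi_{\mathcal L(S\setminus I)})(c_J)$ yields
\[
\ip{T_\mu\chi_{S\setminus I}}{\Delta_J^\nu g}_\nu=\int_J\int_{\mathcal L(S\setminus I)}\bigl(K(x,y)-K(c_J,y)\bigr)w^{-1}(y)\,dy\,\Delta_J^\nu g(x)\,d\nu(x).
\]

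For $y\in \mathcal L(S\setminus I)$ and $x\in J$, both $c_J$ and $x$ lie strictly above $y$, while the goodness of $J$ applied to $I$ (which qualifies since $\ell(I)\ge 2^r\ell(J)$) gives $|c_J-y|\ge \dist(J,\partial I)\ge \ell(J)^{\delta}\ell(I)^{1-\delta}\ge \ell(J)\ge 2|x-c_J|$. Consequently \eqref{e:smooth} yields $|K(x,y)-K(c_J,y)|\lesssim \ell(J)^\ep/|c_J-y|^{1+\ep}$. The heart of the argument is then the pointwise bound
\[
\frac{\ell(J)^\ep}{|c_J-y|^{1+\ep}}\lesssim \Bigl(\frac{\ell(J)}{\ell(I)}\Bigr)^{\ep/2}\frac{\ell(I_{\mathrm{in}})^\ep}{\bigl(\ell(I_{\mathrm{in}})+|c(I_{\mathrm{in}})-y|\bigr)^{1+\ep}},\qquad y\in \mathcal L(S\setminus I),
\]
in the same spirit as \eqref{e:GoodEst}: when $|c(I_{\mathrm{in}})-y|\gtrsim \ell(I)$ the two denominators are comparable (since $c_J\in I_{\mathrm{in}}$) and the bound reduces to $\ell(J)^{\ep/2}\le\ell(I)^{\ep/2}$; when $|c(I_{\mathrm{in}})-y|\lesssim \ell(I)$, the choice $\delta=\ep/(2(1+\ep))$ makes $\ep-\delta(1+\ep)=\ep/2$ and $(1-\delta)(1+\ep)=1+\ep/2$, so combining $|c_J-y|\ge \ell(J)^\delta \ell(I)^{1-\delta}$ with $\ell(I_{\mathrm{in}})\sim\ell(I)$ produces the matching power $\ell(J)^{\ep/2}\ell(I)^{-(1+\ep/2)}$.

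Inserting this pointwise bound, integrating first in $y$ to recognize $\mathcal P_{I_{\mathrm{in}}}(\chi_{\mathcal L(S\setminus I)}\,d\mu)$, and then applying Cauchy--Schwarz in the form $\int_J|\Delta_J^\nu g|\,d\nu\le \nu(J)^{1/2}\|\Delta_J^\nu g\|_\nu$ completes the proof. The only step requiring care is the pointwise kernel estimate in the regime where $y$ is close to $\partial I$: here goodness of $J$ converts the naive lower bound $|c_J-y|\ge \ell(J)$ into the much stronger $|c_J-y|\ge \ell(J)^\delta \ell(I)^{1-\delta}$, which is exactly what the chosen $\delta$ transforms into the $(\ell(J)/\ell(I))^{\ep/2}$ gain recorded in the statement.
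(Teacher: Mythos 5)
Your proposal is correct and follows essentially the same route as the paper: discard $\mathcal U(S\setminus I)$ via the one-sided kernel support, use the $\nu$-mean-zero property of $\Delta_J^\nu g$ together with the smoothness estimate \eqref{e:smooth}, convert the resulting bound $\ell(J)^{\ep}/\dist(y,J)^{1+\ep}$ into the Poisson kernel at $I_{\mathrm{in}}$ with the $(\ell(J)/\ell(I))^{\ep/2}$ gain via the goodness of $J$ (this is exactly \eqref{e:GoodEst}), and finish with Cauchy--Schwarz. Your two-regime verification of the pointwise comparison is just an expanded version of the computation the paper performs once and cites.
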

\begin{proof}
Partition $S \setminus I = \mathcal L(S \setminus I) \cup \mathcal U(S \setminus I)$. For $x \in J \subset I$, and $y \in \mathcal U(S \setminus I)$, $K(x,y)=0$. Furthermore, if $y \in \mathcal L(S \setminus I)$, then $y<x$ and $y<c(J)$ therefore, we may apply \eqref{e:smooth} together with the cancellation of $\Delta^\nu_J g$ to obtain
\[ \abs{\ip{T_{\mu}\chi_{S\setminus I}}{\Delta_J^{\nu}g}_{\nu}} \lesssim \left( \int_{\mathcal L(S\setminus I)}\frac{\ell(J)^{\ep}}{\dist(y,J)^{1+\ep}}\, d\mu(y) \right) \norm{\Delta_J^{\nu}g}_{L^1(\nu)}.\]
Since $J$ is good, for $y \not\in I$, by \eqref{e:GoodEst},
	\[ \frac{\ell(J)^{\ep}}{\dist(y,J)^{1+\ep}} \lesssim \left( \frac{\ell(J)}{\ell(I)} \right)^{\ep/2} \frac{\ell(I)^\ep}{(\ell(I) + |y-c(I_{\mathrm{in}})|)^{d +\ep }},\]
which concludes the proof after Cauchy-Schwarz.
\end{proof}
\begin{proposition}\label{p:stop} The following estimate holds:
	\[ \abs{ \Sigma_{\mathrm{stop}} } \lesssim K \norm{f}_\mu \norm{g}_{\nu}.\]
\end{proposition}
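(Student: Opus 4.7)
The approach is to apply Lemma \ref{l:poisson} pointwise to each summand of $\Sigma_{\mathrm{stop}}$ and then perform two successive Cauchy--Schwarz estimates, with the stopping criterion \eqref{e:stop} supplying the decisive $\sqrt K$ factor. A preliminary obstacle is that Lemma \ref{l:poisson} handles $\chi_{\tilde I \setminus I}$ while $\Sigma_{\mathrm{stop}}$ features $\chi_{\tilde I \setminus I_{\mathrm{in}}}$, so first I would split $\tilde I \setminus I_{\mathrm{in}} = (\tilde I \setminus I) \sqcup I_{\mathrm{nb}}$ and dispatch the residue $I_{\mathrm{nb}}$ by cases: if $I_{\mathrm{in}}$ is the lower child of $I$ then $y \in I_{\mathrm{nb}}$ lies above $x \in J$ and causality forces $K(x,y) = 0$, while if $I_{\mathrm{in}}$ is the upper child then $I_{\mathrm{nb}} \subset \mathcal L(\tilde I \setminus I_{\mathrm{in}})$ and \eqref{e:smooth}, cancellation of $\Delta_J^\nu g$, and goodness of $J$ via \eqref{e:GoodEst} reproduce a bound of the same Poisson shape. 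The two cases combine to
\[ \abs{\ip{T_\mu \chi_{\tilde I \setminus I_{\mathrm{in}}}}{\Delta_J^\nu g}_\nu} \lesssim \nu(J)^{1/2} \norm{\Delta_J^\nu g}_\nu \left(\frac{\ell(J)}{\ell(I_{\mathrm{in}})}\right)^{\ep/2} \mathcal P_{I_{\mathrm{in}}}\bigl(\chi_{\mathcal L(\tilde I \setminus I_{\mathrm{in}})} \, d\mu\bigr). \]

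Next I would Cauchy--Schwarz in $J$ at fixed $I_{\mathrm{in}}$: writing $(\ell(J)/\ell(I_{\mathrm{in}}))^{\ep/2}$ as the product of two factors of exponent $\ep/4$ and pairing them against $\nu(J)^{1/2}$ and $\norm{\Delta_J^\nu g}_\nu$, respectively, one side collapses via $\sum_{J \subset I_{\mathrm{in}}}(\ell(J)/\ell(I_{\mathrm{in}}))^{\ep/2}\nu(J) \lesssim \nu(I_{\mathrm{in}})$ (a geometric series across dyadic scales) into the clean factor $\nu(I_{\mathrm{in}})^{1/2}$, while the other leaves the weighted Haar quantity $B_{I_{\mathrm{in}}}^2 := \sum_{J \subset I_{\mathrm{in}}}\norm{\Delta_J^\nu g}_\nu^2 (\ell(J)/\ell(I_{\mathrm{in}}))^{\ep/2}$. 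A second Cauchy--Schwarz in $I_{\mathrm{in}}$ then separates the $f$- and $g$-contributions. On the $f$-side, the defining property of $\mathcal O_{\tilde I}$---that $I_{\mathrm{in}}$ is not contained in any $R \in \mathcal S(\tilde I)$---forces the stopping condition \eqref{e:stop} to fail at $I_{\mathrm{in}}$, yielding $[\mathcal P_{I_{\mathrm{in}}}(\chi_{\mathcal L(\tilde I \setminus I_{\mathrm{in}})} d\mu)]^2 \nu(I_{\mathrm{in}}) \le 100 K \mu(I_{\mathrm{in}})$; combined with the disbalanced Haar identity $\sum_{I_{\mathrm{in}} \in \mathrm{children}(I)} \avg{\Delta_I^\mu f}_{\mu, I_{\mathrm{in}}}^2 \mu(I_{\mathrm{in}}) = \norm{\Delta_I^\mu f}_\mu^2$ and Haar orthogonality in $L^2(\mu)$, this produces $\sqrt K \norm{f}_\mu$. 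On the $g$-side, swapping the order of summation in $\sum_{I_{\mathrm{in}}} B_{I_{\mathrm{in}}}^2$, for each $J$ there is at most one $I_{\mathrm{in}} \in \mathcal D^\mu$ per dyadic scale containing $J$, so the inner geometric sum is $O(1)$ and the outer sum is bounded by $\norm{g}_\nu^2$.

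Together these give $|\Sigma_{\mathrm{stop}}| \lesssim \sqrt K \norm{f}_\mu \norm{g}_\nu$, which implies the claimed bound since $K \ge 1$. The principal obstacle lies in Step 1, where Lemma \ref{l:poisson} is one notch too coarse and the gap between $\chi_{\tilde I \setminus I}$ and $\chi_{\tilde I \setminus I_{\mathrm{in}}}$ must be bridged by the case split above, exploiting the directional support of $K$ in one case and reassembling the Poisson-type smoothness estimate by hand in the other. The remainder is the standard NTV two-Cauchy--Schwarz template, and the only subtlety there is the symmetric splitting of $(\ell(J)/\ell(I_{\mathrm{in}}))^{\ep/2}$ that lets both the $\nu$-mass sum and the $g$-side Haar sum close independently.
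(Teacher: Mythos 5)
Your proposal is correct and follows essentially the same route as the paper: apply the Poisson/goodness estimate of Lemma \ref{l:poisson} with $I_{\mathrm{in}}$ as the inner cube, use that $I_{\mathrm{in}}\in\mathcal O_{\tilde I}$ forces the stopping inequality \eqref{e:stop} to fail (yielding $\mathcal P_{I_{\mathrm{in}}}(\chi_{\mathcal L(\tilde I\setminus I_{\mathrm{in}})}d\mu)\nu(I_{\mathrm{in}})^{1/2}\lesssim\sqrt K\,\mu(I_{\mathrm{in}})^{1/2}$), and close with two applications of Cauchy--Schwarz and Haar orthogonality. Your explicit treatment of the residue $I_{\mathrm{nb}}$ and your bookkeeping via the sums $B_{I_{\mathrm{in}}}^2$ are only cosmetic variants of the paper's scale-by-scale summation, and your final bound $\sqrt K\,\|f\|_\mu\|g\|_\nu$ is what the paper's argument actually produces as well.
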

\begin{proof}
The proof is similar to \eqref{e:offdiag}. Apply Lemma \ref{l:poisson} and split
	\[ \Sigma_{\mathrm{stop}} \lesssim \sum_{\pm} \sum_{n=0}^\infty 2^{-\frac{n\ep}{2}} \sum_{k \in \mathbb Z} \Sigma_{n,k}^{\pm},\]
where
	\[ \Sigma_{n,k}^{\pm} = \sum_{\substack{ (I,J) \in \mathscr I_5\\ \ell(I)= 2^{k}\\ \ell(J)= 2^{-n+k} \\ J \subset I^{\pm}}} \left(\frac{\nu(J)}{\mu(I^{\pm})}\right)^{\frac 12} \mathcal P_{I^{\pm}}(\chi_{\mathcal L(\tilde{I}\setminus I^{\pm})} \,d\mu)\|\Delta_J^{\nu}g\|_{\nu}\|\Delta_I^{\mu}f\|_{\mu}.\]
Applying Cauchy-Schwarz to the sum in $J$, and using orthogonality in the same way as in the proof of \eqref{e:offdiag}, it remains to estimate, for each $n$, $k$, and $I$,
	\[  \sum_{\substack{J \subset I^{\pm} \\ \ell(J)=2^{-n+k} }} \frac{\nu(J)}{\mu(I^{\pm})}\mathcal P_{I^{\pm}}(\chi_{\mathcal L(\hat{I}\setminus I^{\pm})}\,d\mu)^2.\]
However, recall that $\tilde I$ was chosen so that the child of $I$ containing $J$, in this case $I^{\pm}$, belongs to $\mathcal O_{\tilde I}$. In other words, $I^{\pm}$ is a \textit{non-stopping cube} and therefore \eqref{e:stop} does not hold for $S=\tilde I$ and $Q=I^{\pm}$. In fact, the opposite holds:
	\[ \mathcal P_{I^{\pm}}(\chi_{\mathcal L(\hat{I}\setminus I^{\pm})}\,d\mu)^2 \nu(I^{\pm}) \lesssim K \mu(I^{\pm}).\]
Combining the above two displays concludes the proof.\end{proof}
\begin{proposition}\label{p:pi1} The first paraproduct is bounded as follows:
	\[ \norm{\pi^{(1)}f}_{L^2(\nu)} \lesssim \sqrt{K} + \sqrt{K_\chi}.\]
\end{proposition}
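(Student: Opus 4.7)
My plan is to expose $\pi^{(1)}$ as a dyadic paraproduct in the $\nu$-Haar basis by telescoping the $I$-sum, and then split into a paraproduct piece controlled by testing (giving $\sqrt{K_\chi}$) and a Poisson-tail piece controlled by Lemma \ref{l:poisson} via the non-stopping property (giving $\sqrt K$).

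For fixed $S \in \mathcal S$ and $J \in \mathcal O_S$ good, summing $\avg{\Delta_I^\mu f}_{\mu, I_{\mathrm{in}}} = \avg{f}_{\mu, I_{\mathrm{in}}} - \avg{f}_{\mu, I}$ over the admissible chain of $I \in \mathcal D^\mu$ (with $J \subset I_{\mathrm{in}} \in \mathcal O_S$ and $\ell(I) \ge 2^r \ell(J)$) telescopes to $\avg{f}_{\mu, J^\flat} - \avg{f}_{\mu, F(S)}$, where $J^\flat$ is the $\mathcal D^\mu$-ancestor of $J$ at scale $2^{r-1}\ell(J)$ and $F(S)$ is the $\mathcal D^\mu$-parent of $S$. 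Using self-adjointness of $\Delta_J^\nu$ on $L^2(\nu)$,
\begin{equation*}
\pi^{(1)} f = \sum_{S \in \mathcal S} \sum_{J \in \mathcal O_S^{\mathrm{good}}} \bigl(\avg{f}_{\mu, J^\flat} - \avg{f}_{\mu, F(S)}\bigr) \Delta_J^\nu T_\mu \chi_S.
\end{equation*}

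The $\avg{f}_{\mu, F(S)}$ piece is a standard paraproduct. Because $\{\mathcal O_S\}$ partitions $\mathcal D^\nu$, the Haar projections $\mathbb P^\nu_{\mathcal O_S^{\mathrm{good}}}$ have mutually orthogonal ranges in $L^2(\nu)$; Bessel's inequality together with \eqref{e:local-test} produces the bound $K_\chi \sum_S \avg{f}_{\mu, F(S)}^2 \mu(S)$, which the Carleson embedding theorem controls by $K_\chi \norm{f}_\mu^2$ once we observe that the packing \eqref{e:pack} makes $\{\mu(S)\}_{S \in \mathcal S}$ a Carleson sequence. For the $\avg{f}_{\mu, J^\flat}$ piece, I would not telescope further but instead split $T_\mu \chi_S = T_\mu \chi_{J^\flat} + T_\mu \chi_{S \setminus J^\flat}$ pair-by-pair. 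The local part absorbs into a second paraproduct bound of the same type, and the tail is handled verbatim as in the proof of Proposition \ref{p:stop}: since $J^\flat \in \mathcal O_S$ is non-stopping, the converse of \eqref{e:stop} yields $\mathcal P_{J^\flat}(\chi_{\mathcal L(S \setminus J^\flat)} d\mu)^2 \nu(J^\flat) \lesssim K \mu(J^\flat)$, and Lemma \ref{l:poisson} combined with Cauchy--Schwarz and the geometric scale sum in $n$ produces the $\sqrt K$ factor.

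The main obstacle is the $\avg{f}_{\mu, J^\flat}$ piece, whose coefficient depends on $J$ at a scale comparable to $\ell(J)$; this blocks a clean paraproduct estimate via testing alone and forces invocation of the pivotal bound. Mutual orthogonality of Haar subspaces across distinct $J$'s is what allows the paraproduct and Poisson-tail ingredients to combine into a single $L^2(\nu)$ bound.
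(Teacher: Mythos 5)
Your overall architecture (telescope the $I$-sum, peel off a top-average paraproduct controlled by testing and the packing condition \eqref{e:pack}, then handle the remainder via testing and the Poisson/non-stopping bound) matches the paper's, and the top-average piece is handled correctly (the paper in fact avoids it entirely by replacing $f$ with $\mathbb P^\mu_{\mathcal O_S}f$, whose average over $S$ vanishes; note also that for $\mathrm{D}_1$ the admissible chain of $I$ ends at $S$, not $F(S)$, since pairs with $I_{\mathrm{in}}=S$ were assigned to $\mathrm{D}_2$). However, your treatment of the main piece $\sum_J\avg{f}_{\mu,J^\flat}\Delta_J^\nu T_\mu\chi_S$ has a genuine gap: the pair-by-pair split $\chi_S=\chi_{J^\flat}+\chi_{S\setminus J^\flat}$ is made at the wrong scale. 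After using orthogonality of the $\Delta_J^\nu$, both of your resulting pieces reduce to $\sum_{I}\avg{f}_{\mu,I}^2\,c_I$, where $I=J^\flat$ ranges over essentially all cubes of $\mathcal O_S$ at all scales and $c_I\lesssim K_\chi\,\mu(I)$ (local part, from testing on $J^\flat$) or $c_I\lesssim K\mu(I)$ (tail, from the non-stopping inequality at $J^\flat$). But $\{\mu(I)\}_{I\in\mathcal D^\mu}$ is not a Carleson sequence: $\sum_{I\subset Q}\mu(I)$ diverges, since each dyadic generation contributes $\mu(Q)$. So Carleson embedding does not apply and neither piece can be summed to $\norm{f}_{\mu}^2$. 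Relatedly, the ``geometric scale sum in $n$'' you invoke is absent here: $J$ and $J^\flat$ are always separated by exactly $r-1$ generations, so Lemma \ref{l:poisson} only yields the constant $2^{-(r-1)\ep/2}$ and one non-stopping application per $J$, with no decay left to sum.

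The repair is the paper's: keep $\chi_S$ intact in the paraproduct symbol, set $a_I=\sum_{J:\,L(J)=I}\norm{\Delta_J^\nu T_\mu\chi_S}_{\nu}^2$, and verify the Carleson condition $\sum_{L\in\mathcal O_S,\,L\subset I}a_L\lesssim(K_\chi+K)\,\mu(I)$ by splitting $\chi_S=\chi_I+\chi_{S\setminus I}$ at the top cube $I$ of each Carleson block. Then a single application of \eqref{e:local-test} on $I$ controls, via Bessel's inequality, the entire tower of $J\subset I$ at once, while the Poisson tails carry the factor $\left(\ell(J)/\ell(I)\right)^{\ep}$, which is genuinely summable over the scales of $J$ below $I$ and is closed by one application of the non-stopping inequality at the children of $I$.
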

\begin{proof}
Recall 
	\[ \ip{\pi^{(1)} f}{g}_\nu = \sum_{S \in \mathcal S} \sum_{(I,J) \in \mathscr I_5 \cap \mathcal O_S} \avg{ \Delta_I^{\mu} f}_{\mu,I_{\mathrm{in}}} \ip{T_{\mu}\chi_{S}}{\Delta_J^{\nu}g}_{\nu}.\]
Let us fix $J$ and sum over $I \supset J$ with $\ell(I) \ge 2^r \ell(J)$ and $I \subset S$. By \eqref{e:tele}, 
	\[ \sum_{I}\avg{ \Delta_I^{\mu} f}_{\mu,I_{\mathrm{in}}} = \sum_I \avg{ \Delta_I^{\mu} \mathbb P^\mu_{\mathcal O_S} f}_{\mu,I_{\mathrm{in}}} = \avg{\mathbb P^\mu_{\mathcal O_S} f}_{\mu,S} - \avg{\mathbb P^\mu_{\mathcal O_S} f}_{\mu,L(J)}, \]
where $L(J) \in \mathcal D^\mu$ satisfies $\ell(L(J))\sim 2^r \ell(J)$ and $J \subset L(J)$. Furthermore, the average over $S$ vanishes since $\mathbb P^\mu_{\mathcal O_S} f$ is supported inside $S$ and has mean zero. Therefore,
	\[  \ip{\pi^{(1)} f}{g}_\nu = \sum_{S \in \mathcal S} \sum_{J \in \mathcal O_S} \avg{ \mathbb P^\mu_{\mathcal O_S} f}_{\mu,L(J)} \ip{\Delta_J^{\nu}T_{\mu}\chi_{S }}{\mathbb P^\nu_{\mathcal O_S}g}_{\nu}.\]
By orthogonality of $\mathbb P_{\mathcal O_S}$, it remains to estimate for each $S$, 
	\[ \norm{\pi^{(1)}_S f}_\nu^2 = \sum_{I \in \mathcal O_S}  \avg{f}_{\mu,I}^2 a_I, \quad a_I = \sum_{\substack{J \in \mathcal O_S\\ \ell(J)\sim 2^{-r} \ell(I)}} \norm{\Delta_J^{\nu}T_{\mu}\chi_{S }}_\nu,\]
where we have relabelled $L(J)$ as $I$. To establish the Carleson condition on $a_I$, rewrite
	\[ \sum_{\substack{L \in \mathcal O_S\\ L \subset I}} a_L = \sum_{\substack{J \in \mathcal O_S \\ (I,J) \in \mathscr I_5}}  \norm{\Delta_J^{\nu}T_{\mu}\chi_{S }}_\nu^2.\]
By orthogonality of $\Delta^\nu_J$, duality, and the testing condition \eqref{e:local-test} on $I$, it remains to estimate, by the same argument as in the proof of Proposition \ref{p:stop}, crucially relying on $I \in \mathcal O_S$ in the same way,
	\[ \sum_{\substack{J \in \mathcal O_S \\ (I,J) \in \mathscr I_5}}  \abs{ \ip{T_{\mu}\chi_{S\setminus I }}{\Delta_J^{\nu} \phi}_\nu } \lesssim K \norm{\phi}_\nu. \] 
\end{proof}

\subsection{The third paraproduct}
For two cubes $S' \subset S$ in the sparse collection $\mathcal S$, there is a unique chain $\{S_0,\ldots,S_N\} \subset \mathcal S$ such that 
	\[ S_0=S', \quad S_N=S, \quad S_{j+1} = \hat S_j, \qquad j=0,\ldots,N-1.\]
Define $r(S',S)=N$, which is the tree distance in $\mathcal S$. Define for each $S \in \mathcal S$, and $j=0,1,2,\ldots$
\[ \begin{aligned}
a^j_S :=\sum_{S'\in\mathcal S, S'\subset S, r(S', S) =j} \norm{ \mathbb{P}_{\nu, \mathcal D(S')} T_{\mu}\chi_{\hat{S}\setminus S}}_{\nu}^2,\end{aligned} \]
\begin{proposition}\label{e:pi3}
Suppose for each $I \in \mathcal S$,
	\begin{equation}\label{e:carla} \sum_{\substack{S \in \mathcal S \\ F(S) \subset I}} a^j_S \le A_j \mu(I).\end{equation}
Then, for any $0<b<1$
	\[ \pi^{(3)} \lesssim \frac{1}{1-b}A_0 + \sum_{j=1}^\infty b^{-j} A_j.\]
\end{proposition}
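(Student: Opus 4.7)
The plan is to estimate the bilinear form $\ip{\pi^{(3)} f}{g}_\nu$ by a weighted Cauchy--Schwarz argument, where a geometric weight $b^{r(S',S)}$ decouples the sparse tree geometry from Carleson embeddings against the sequences $\{a_S^j\}_{S \in \mathcal S}$.

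I first decompose, for each $S \in \mathcal S$, $\mathbb P^\nu_{\mathcal D(S)} g = \sum_{S' \in \mathcal S,\, S' \subset S} \mathbb P^\nu_{\mathcal O_{S'}} g$, using the partition of $\nu$-dyadic subcubes of $S$ induced by the sparse tree. Writing $G_{S'} := \norm{\mathbb P^\nu_{\mathcal O_{S'}} g}_\nu$ and $T_{S,S'} := \norm{\mathbb P^\nu_{\mathcal O_{S'}} T_\mu \chi_{\hat S \setminus S}}_\nu$, Cauchy--Schwarz inside each inner product yields
\[ \abs{\ip{\pi^{(3)} f}{g}_\nu} \le \sum_{\substack{S, S' \in \mathcal S \\ S' \subset S}} \abs{\avg{f}_{\mu, F(S)}}\, G_{S'}\, T_{S, S'}. \]

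The second step is a weighted Cauchy--Schwarz with weights $b^{r(S',S)}$ on this doubly indexed sum. For fixed $S'$, the ancestors $S \supset S'$ in $\mathcal S$ form an infinite chain indexed by $k = r(S',S) \in \{0,1,2,\ldots\}$, so the $g$-side collapses:
\[ \sum_{\substack{S, S' \in \mathcal S \\ S' \subset S}} G_{S'}^2\, b^{r(S',S)} = \frac{1}{1-b} \sum_{S'} G_{S'}^2 \le \frac{\norm{g}_\nu^2}{1-b}, \]
by orthogonality of the $\mathbb P^\nu_{\mathcal O_{S'}}$. For the $f$-side, I group by tree distance $j$ and invoke the identity
\[ \sum_{\substack{S' \subset S \\ r(S',S) = j}} T_{S,S'}^2 = a_S^j - a_S^{j+1}, \]
which follows by expanding $\mathbb P^\nu_{\mathcal D(S')} = \sum_{S'' \subset S',\, S'' \in \mathcal S} \mathbb P^\nu_{\mathcal O_{S''}}$ and matching against the definition of $a_S^j$. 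Abel summation (valid because $a_S^j \to 0$ as $j \to \infty$, by telescoping against the finite quantity $\norm{\mathbb P^\nu_{\mathcal D(S)} T_\mu \chi_{\hat S \setminus S}}_\nu^2$) then gives $\sum_{j \ge 0} b^{-j}(a_S^j - a_S^{j+1}) = a_S^0 + (1-b)\sum_{j \ge 1} b^{-j} a_S^j$. Applying Carleson embedding against each sequence $\{a_S^j\}_{S \in \mathcal S}$, whose Carleson constant is $\lesssim A_j$ by hypothesis \eqref{e:carla}, produces $\sum_{S \in \mathcal S} \avg{f}_{\mu, F(S)}^2 a_S^j \lesssim A_j \norm{f}_\mu^2$. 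Multiplying the two sides together yields
\[ \abs{\ip{\pi^{(3)} f}{g}_\nu}^2 \lesssim \bigg( \frac{A_0}{1-b} + \sum_{j \ge 1} b^{-j} A_j \bigg) \norm{f}_\mu^2\, \norm{g}_\nu^2, \]
which is the claimed bound.

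The main subtlety lies in calibrating the weight $b$: since each sparse cube has an infinite chain of ancestors, the $g$-side geometric series forces the choice $b < 1$, which in turn inflates the $f$-side by the exponentially growing factor $b^{-j}$ at tree-distance $j$. The parameter $b$ is deliberately left free in the statement precisely so that it can later be optimized against the growth rate of the $A_j$'s in the subsequent Carleson bounds for the individual $a_S^j$.
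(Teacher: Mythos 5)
Your argument is correct and is essentially the paper's proof: both hinge on a weighted Cauchy--Schwarz over nested pairs $(S,S')$ with the geometric weight $b^{\pm r(S',S)}$, using the uniqueness of the ancestor at each tree distance to sum $\sum_k b^k = \tfrac{1}{1-b}$ on one side and the Carleson hypothesis \eqref{e:carla} on the other. The only (harmless) difference is organizational: you bound the bilinear form $\ip{\pi^{(3)}f}{g}_\nu$ via the pieces $\mathbb P^\nu_{\mathcal O_{S'}}$ and an Abel summation of $a_S^j-a_S^{j+1}$, whereas the paper expands $\norm{\pi^{(3)}f}_\nu^2$ directly and keeps the full projections $\mathbb P^\nu_{\mathcal D(S')}$, so $a_S^j$ appears without telescoping.
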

\begin{proof}
Expanding 
	\[ \norm{ \pi^{(3)} f }_{L^2(d\nu)}^2 = \sum_{S,S' \in \mathcal S} \avg{ f}_{\mu,F(S')}\avg{ f}_{\mu,F(S)} \ip{ \mathbb{P}^\nu_{\mathcal D^\nu(S)} T_{\mu} \chi_{\hat{S}\setminus S}}{ \mathbb{P}^\nu_{\mathcal D^\nu(S')} T_{\mu}\chi_{\hat{S'}\setminus S'} }_{\nu} \]
The composition of the projections $\mathbb{P}^\nu_{\mathcal D^\nu(S')}\mathbb{P}^\nu_{\mathcal D^\nu(S)}$ vanishes whenever $S$ and $S'$ are disjoint and returns the smaller cube when one is contained in the other. Therefore, we may estimate for any $0<b<1$ and Cauchy-Schwarz,
	\[ \begin{aligned} \norm{\pi^{(3)} f}_{L^2(\nu)}^2 &\lesssim \sum_{j=0}^\infty \sum_{S \in \mathcal S} \sum_{\substack{S' \in \mathcal S \\ S' \subset S \\ r(S',S)=j}} \avg{ f}_{\mu,F(S)}^2 b^{-j} \norm{ \mathbb{P}^\nu_{\mathcal D^\nu(S')} T_{\mu} \chi_{\hat{S}\setminus S}}_\nu^2 \\
	&+\sum_{j=0}^\infty \sum_{S \in \mathcal S} \sum_{\substack{S' \in \mathcal S \\ S' \subset S \\ r(S',S)=j}} \norm{ \mathbb{P}^\nu_{\mathcal D^\nu(S')} T_{\mu}\chi_{\hat{S'}\setminus S'} }_{\nu}^2 b^{j} \avg{ f}_{\mu,F(S')}^2. \end{aligned} \]
The first triple sum above is controlled by $\sum_{j=0}^\infty b^{-j} A_j$, where $A_j$ satisfies \eqref{e:carla}. We claim that the second sum is controlled by $\frac{1}{1-b} A_0$. Indeed, exchanging the sums, we need to estimate
	\[ \sum_{S' \in \mathcal S} \sum_{j=0}^\infty b^j \sum_{\substack{S \supset S' \\ r(S',S)=j}} a_{S'}^0 \avg{ f}_{\mu,F(S')}^2.\]
However, the cardinality of the innermost sum is one; therefore, we sum in $j$ and appeal to \eqref{e:carla} with $j=0$.
\end{proof}
It remains to verify the Carleson conditions \eqref{e:carla}.
\begin{proposition}\label{p:carla} The estimate \eqref{e:carla} holds with $A_j \lesssim 2^{-j\ep}K$.
\end{proposition}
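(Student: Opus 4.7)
The plan is to establish the pointwise bound
\begin{equation*}
a_S^j \lesssim 2^{-j\ep}\,\mathcal P_S(\chi_{\mathcal L(\hat S \setminus S)}\,d\mu)^2\,\nu(S),
\end{equation*}
and then sum over $S$ with $F(S)\subset I$ using the causal pivotal inequality (Lemma \ref{l:pivotal}) together with the sparse packing \eqref{e:pack}. The key geometric input is that $r(S',S)=j$ forces $\ell(S') \le 2^{-j}\ell(S)$, since each sparse-descent step from $\hat S_i$ to $S_i$ requires $S_i \subsetneq \hat S_i$ as dyadic cubes in $\mathcal D^\mu$; this side-length decay will combine with the $(\ell(J)/\ell(S))^{\ep/2}$ factor coming from the good-cube improvement to produce the desired $2^{-j\ep}$.

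For the pointwise bound, fix $S'\subset S$ in $\mathcal S$ with $r(S',S)=j$. Orthogonality gives
\begin{equation*}
\|\mathbb P^\nu_{\mathcal D(S')} T_\mu \chi_{\hat S \setminus S}\|_\nu^2 = \sum_{J \in \mathcal D^\nu(S')} \|\Delta_J^\nu T_\mu \chi_{\hat S \setminus S}\|_\nu^2.
\end{equation*}
For each good $J\subset S'$, the argument underlying Lemma \ref{l:poisson}---upward support kills the contribution of $\mathcal U(\hat S\setminus S)$, while on $\mathcal L(\hat S\setminus S)$ we combine kernel smoothness, the cancellation $\int \Delta_J^\nu=0$, and \eqref{e:GoodEst} applied with containing cube $S$---yields
\begin{equation*}
\|\Delta_J^\nu T_\mu \chi_{\hat S \setminus S}\|_\nu \lesssim \nu(J)^{1/2}\bigl(\ell(J)/\ell(S)\bigr)^{\ep/2} \mathcal P_{S^{\mathrm{ch}}}(\chi_{\mathcal L(\hat S \setminus S)}\,d\mu),
\end{equation*}
where $S^{\mathrm{ch}}$ denotes the child of $S$ containing $J$. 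Squaring and evaluating the geometric series $\sum_k 2^{k\ep}\sum_{\ell(J)=2^k,\,J\subset S'}\nu(J) \lesssim \nu(S')\,\ell(S')^\ep$ produces $\|\mathbb P^\nu_{\mathcal D(S')} T_\mu\chi_{\hat S\setminus S}\|_\nu^2 \lesssim 2^{-j\ep}\nu(S')\,\mathcal P_{S^{\mathrm{ch}}}(\cdots)^2$. Summing over the disjoint collection $\{S':r(S',S)=j\}$ (which gives $\sum \nu(S')\le \nu(S)$) and the elementary comparison $\mathcal P_{S^{\pm}}(d\lambda)\lesssim \mathcal P_S(d\lambda)$ for $\lambda$ supported outside $S$ delivers the displayed bound on $a_S^j$.

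To complete the Carleson estimate, sum over $S$ with $F(S)\subset I$: since $I\in \mathcal S$, each such $S$ satisfies $S\subsetneq I$ and $\hat S\subseteq I$. Grouping by $S^*=\hat S$ and applying Lemma \ref{l:pivotal} to the disjoint family $\mathcal S(S^*)$ gives
\begin{equation*}
\sum_{S\in \mathcal S(S^*)} \mathcal P_S(\chi_{\mathcal L(S^*\setminus S)}\,d\mu)^2\,\nu(S) \lesssim K\,\mu(S^*),
\end{equation*}
and sparse packing \eqref{e:pack} supplies $\sum_{S^*\in \mathcal S,\,S^*\subseteq I}\mu(S^*) \le 2\mu(I)$, so $\sum_{F(S)\subset I} a_S^j \lesssim 2^{-j\ep}K\,\mu(I)$, as required. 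The main obstacle I anticipate is the uniform applicability of the Poisson estimate: for $j<r$ there are finitely many $J\in \mathcal D^\nu(S')$ at scales within a factor $2^r$ of $\ell(S)$ where the good-cube improvement \eqref{e:GoodEst} cannot be invoked, but because only $O(1)$ such cubes contribute per scale their total effect is absorbed into the implicit constants (alternatively, one can handle them via a direct testing bound using \eqref{e:local-test}).
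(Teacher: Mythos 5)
Your argument is correct and follows essentially the same route as the paper: your pointwise bound $a_S^j \lesssim 2^{-j\ep}\nu(S)\,\mathcal P_S(\chi_{\mathcal L(\hat S\setminus S)}d\mu)^2$ is precisely the content of the paper's Lemma \ref{l:carlaP} (derived, as you do, from Lemma \ref{l:poisson} plus the geometric series over scales $\ell(J)\le\ell(S')\le 2^{-j}\ell(S)$), and the summation again rests on Lemma \ref{l:pivotal} applied to the disjoint stopping families $\mathcal S(S^*)$ together with the packing condition \eqref{e:pack}. The only difference is organizational: the paper iterates over maximal generations $\mathcal F_k(I)$ and splits $\chi_{I\setminus S}=\chi_{I\setminus F_\beta}+\chi_{F_\beta\setminus S}$, whereas you group the sum directly by the sparse parent $\hat S=S^*$, which is a slightly cleaner bookkeeping arriving at the same bound.
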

A key ingredient will be the following result.
\begin{lemma}
\label{l:carlaP}
Let $S',S,I \in \mathcal S$ with $S' \subset S \subset I$ and $r(S',S)=j$. Then,
\[ \norm{ \mathbb P_{\nu, \mathcal D(S')} (T_{\mu}\chi_{I\setminus S}) }_{\nu}^2 \lesssim 2^{-j\ep} \nu(S')(\mathcal P_S\chi_{\mathcal L(I\setminus S)} d\mu)^2. \]
\end{lemma}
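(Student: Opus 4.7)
My plan is to expand $\mathbb P_{\nu, \mathcal D(S')}(T_\mu \chi_{I\setminus S})$ in the Haar basis for $\mathcal D^\nu$, estimate each coefficient by combining kernel smoothness with the good-cube estimate \eqref{e:GoodEst}, and sum the resulting geometric series. The $2^{-j\ep}$ decay will come from the observation that every step in the sparse tree $\mathcal S$ strictly halves the scale, so $\ell(S') \leq 2^{-j}\ell(S)$.

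First, since $T$ is upward-mapping, $K(x,y) = 0$ whenever $x \in S \supset S'$ and $y \in \mathcal U(I \setminus S)$, so $T_\mu \chi_{I \setminus S} = T_\mu \chi_{\mathcal L(I\setminus S)}$ on $S$. By Haar orthogonality,
\[ \norm{\mathbb P_{\nu,\mathcal D(S')}(T_\mu \chi_{I\setminus S})}_\nu^2 = \sum_{J \in \mathcal D^\nu,\, J\subset S'} \norm{\Delta_J^\nu (T_\mu \chi_{\mathcal L(I\setminus S)})}_\nu^2, \]
and by the good-bad probabilistic reduction already employed to control \eqref{e:Haar}, I restrict the sum to good $J$. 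Fix such a good $J$. For $x, x' \in J$ and $y \in \mathcal L(I \setminus S)$ (which lies strictly below $S$), smoothness \eqref{e:smooth} gives $|K(x,y) - K(x',y)| \lesssim \ell(J)^\ep/\dist(J,y)^{1+\ep}$. Decomposing $\dist(J,y) = \dist(\inf S, J) + (\inf S - y)$ and applying \eqref{e:GoodEst} to the boundary point $\inf S \in \partial S$ (which uses goodness of $J$ at the scale $\ell(S) \geq 2^r \ell(J)$, valid once $j \geq r$), a case split according to whether $\inf S - y$ is smaller or larger than $\ell(S)$ produces
\[ \frac{\ell(J)^\ep}{\dist(J,y)^{1+\ep}} \lesssim \left(\frac{\ell(J)}{\ell(S)}\right)^{\ep/2} \frac{\ell(S)^\ep}{(\ell(S) + |c(S)-y|)^{1+\ep}}. \]
Integrating against $\chi_{\mathcal L(I\setminus S)}(y)\,d\mu(y)$ yields the pointwise oscillation bound $(\ell(J)/\ell(S))^{\ep/2} \mathcal P_S(\chi_{\mathcal L(I\setminus S)} d\mu)$, and hence $\norm{\Delta_J^\nu T_\mu \chi_{\mathcal L(I\setminus S)}}_\nu^2 \lesssim \nu(J)(\ell(J)/\ell(S))^\ep [\mathcal P_S(\chi_{\mathcal L(I\setminus S)} d\mu)]^2$.

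Finally, for each dyadic scale $2^k \leq \ell(S')$, the cubes $J \in \mathcal D^\nu_k$ with $J \subset S'$ have total $\nu$-measure $\nu(S')$, so the geometric sum in $k$ gives
\[ \sum_{J \in \mathcal D^\nu,\, J \subset S'} \nu(J)\left(\frac{\ell(J)}{\ell(S)}\right)^\ep \lesssim \nu(S')\left(\frac{\ell(S')}{\ell(S)}\right)^\ep \leq \nu(S') 2^{-j\ep}, \]
completing the claim. The main technical hurdle I anticipate is the conversion from $\ell(J)^\ep/\dist(J,y)^{1+\ep}$ to the $\mathcal P_S$ integrand, which demands treating $y$ near $\partial S$ via \eqref{e:GoodEst} and $y$ far from $\partial S$ via the elementary bound $\dist(J,y) \geq \inf S - y$; the finitely many large scales $\ell(J) > 2^{-r}\ell(S)$ only arise when $j < r$, and there the factor $2^{-j\ep}$ is itself $O(1)$, so any crude oscillation estimate using the size condition on $K$ suffices and is absorbed into the implicit constant.
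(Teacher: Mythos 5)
Your proof is correct and follows essentially the same route as the paper: the paper bounds each Haar coefficient via its Lemma \ref{l:poisson} (kernel vanishing on $\mathcal U(I\setminus S)$, smoothness plus goodness of $J$ converting $\ell(J)^\ep/\dist(J,y)^{1+\ep}$ into $(\ell(J)/\ell(S))^{\ep/2}$ times the Poisson integrand), and then performs the identical geometric summation $\sum_{J\subset S'}\nu(J)(\ell(J)/\ell(S))^\ep\lesssim 2^{-j\ep}\nu(S')$ using $\ell(S')\le 2^{-j}\ell(S)$. You have simply inlined the proof of that lemma rather than citing it.
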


\begin{proof}
By duality and Lemma \ref{l:poisson}, 
\[ \norm{ \mathbb{P}_{\nu, \mathcal D^\nu(S')}( T_{\mu}\chi_{I\setminus S})}_{\nu}^2\le \left[ \mathcal P_S(\chi_{\mathcal L(I\setminus S)} \,d\mu)\right]^2 \sum_{\substack{J \in \mathcal D^\nu \\  
J\subset S'}} \nu(J)\left(\frac{\ell(J)}{\ell(S)}\right)^{\ep}. \]
However, since $r(S',S)=j$, $\ell(S') \le 2^{-j} \ell(S)$, and therefore the summation in the above display can be estimated as
	\[ \sum_{\substack{J \in \mathcal D^\nu \\ J\subset S'}} \nu(J)\left(\frac{\ell(J)}{\ell(S)}\right)^{\ep} = \sum_{i=j}^\infty 2^{-i \ep} \sum_{\substack{J \in \mathcal D^\nu \\  
J\subset S' \\ \ell(J)=2^{-i} \ell(S) }} \nu(J) \lesssim 2^{-j\ep}\nu(S').\]
\end{proof}

\begin{proof}[Proof of Proposition \ref{p:carla}]
Let $I \in \mathcal S$ and let $\mathcal F(I)$ be the maximal subcubes $S \in \mathcal S$ such that $F(S)\subset I$. Note that if $S \in \mathcal F(I)$, then $\hat S = I$. If this failed, then there would be $S' \in \mathcal S$ such that $S \subsetneq S' \subsetneq I$ and in particular $F(S') \subset I$, contradicting the maximality of $S$. Let $\{F_{\beta}\}_{\beta\in B}$ denote the family of maximal cubes among $\{F(S)\}_{S \in \mathcal F(I)}$. For each $\beta \in B$, let $\{S_{\beta,\gamma}\}_\gamma$ denote all cubes from $\mathcal F(I)$ contained in $F_{\beta}$. Then, by Lemma \ref{l:carlaP},
\begin{equation}\label{e:ppack}\begin{aligned}
\sum_{S \in \mathcal F(I)} &a_S^j = \sum_{\beta\in B} \sum_{\gamma} \sum_{\substack{S' \subset S_{\beta,\gamma} \\ r(S',S)=j}}\norm{ \mathbb P^\nu_{\mathcal D^\nu(S')} (T_{\mu}\chi_{I \setminus S_{\beta,\gamma}})}_{\nu}^2  \\
& \lesssim \sum_{\beta\in B} \sum_{\gamma}  \sum_{\substack{S' \subset S_{\beta,\gamma} \\ r(S',S)=j}} \|\mathbb P^\nu_{\mathcal D^\nu(S')} (T_{\mu}\chi_{I\setminus F_{\beta}})\|^2_{\nu} + \|\mathbb P^\nu_{\mathcal D^\nu(S')} (T_{\mu}\chi_{F_{\beta}\setminus S_{\beta,\gamma} })\|^2_{\nu} \\
&\lesssim 2^{-j\ep} \sum_{\beta,\gamma,S'} \nu(S') \left[ \mathcal P_{F_\beta}(\chi_{\mathcal L(I \setminus F_\beta)} \, d\mu)^2 + \mathcal P_{S_{\beta,\gamma}}(\chi_{\mathcal L(F_\beta \setminus S_{\beta,\gamma})} \, d\mu)^2 \right] \\
&\lesssim 2^{-j\ep} K \mu(I),\end{aligned} \end{equation}
where the final inequality follows by the pairwise disjointness of each of the collections $\{S' \subset S : r(S',S)=j\}$, $\{S_{\beta,\gamma}\}_\gamma$ and $\{F_\beta\}_\beta$ together with Lemma \ref{l:pivotal}. Let $\mathcal F_0=\mathcal F(I)$ and for each $k\ge 0$ define
	\[ \mathcal F_{k+1} = \{ R \in \mathcal F(Q): Q \in \mathcal F_{k} \}.\]
Then, for each $k \ge 1$, by \eqref{e:ppack},
	\[ \sum_{S \in \mathcal F_k}a_S^j = \sum_{Q \in \mathcal F_{k-1}} \sum_{S \in \mathcal F(Q)} a_S^j \lesssim 2^{-j\ep} K \sum_{Q \in \mathcal F_{k-1}} \mu(Q).\]
Therefore, summing in $k$ and applying \eqref{e:sparse} establishes \eqref{e:carla}.
\end{proof}

\section{The One-Sided $A_2$ theorem with logarithmic loss}
We need to control the testing conditions. We will control both $K_\chi$ and $K_{\mathrm{WB}}$ by the global testing conditions:
    \[ \norm{T(w^{-1} \chi_I)}_{L^2(w)}^2 \le K_{\mathrm{gl}} w(I), \quad \norm{T'(w \chi_I)}_{L^2(w^{-1})}^2 \le K_{\mathrm{gl}} w^{-1}(I), \quad \forall I \subset \mathbb H. \]
    
\begin{theorem}\label{thm:test} The global testing constant satisfies:
    \[ \sqrt{K_{\mathrm{gl}}} \lesssim [w]_{A_2^\uparrow} \log\left( e + [w]_{A_2^\uparrow} \right).\]
\end{theorem}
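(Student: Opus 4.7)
The plan decomposes the bound along the two-step scheme sketched in \eqref{e:KtoLog}: first reduce the testing constant to weak-type $(2,2)$ norms via Lorentz-space duality, and then extrapolate from the weak-type $(1,1)$ endpoint \eqref{e:A1} to recover the $[w]_{\Atwoup}(1+\log[w]_{\Atwoup})$ growth.

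\emph{Step 1 (Lorentz reduction).} Fix $I \subset \mathbb H$. Using the unweighted $L^2(w)$-$L^2(w^{-1})$ duality and moving $T$ to its transpose,
\[
\norm{T(w^{-1}\chi_I)}_{L^2(w)} = \sup_{\norm{g}_{L^2(w^{-1})} \le 1} \int T(w^{-1}\chi_I)\,g = \sup_g \int \chi_I\,(T'g)\,w^{-1}\,dx.
\]
View the right-hand side as a pairing against the measure $d\sigma = w^{-1}\,dx$ and apply the Lorentz duality $L^{2,1}(\sigma)$-$L^{2,\infty}(\sigma)$, together with the trivial bound $\norm{\chi_E}_{L^{2,1}(\sigma)} \simeq \sigma(E)^{1/2}$:
\[
\int \chi_I\,(T'g)\, w^{-1}\,dx \lesssim w^{-1}(I)^{1/2}\,\norm{T'g}_{L^{2,\infty}(w^{-1})} \le w^{-1}(I)^{1/2}\,\norm{T'}_{L^2(w^{-1}) \to L^{2,\infty}(w^{-1})}.
\]
Squaring yields the $T'$-side of \eqref{e:test} with constant proportional to $\norm{T'}_{L^2(w^{-1}) \to L^{2,\infty}(w^{-1})}^2$, and the $T$-side is handled symmetrically with $w$ and $w^{-1}$ interchanged, giving
\[
\sqrt{K_{\mathrm{gl}}} \lesssim \norm{T}_{L^2(w)\to L^{2,\infty}(w)} + \norm{T'}_{L^2(w^{-1})\to L^{2,\infty}(w^{-1})}.
\]

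\emph{Step 2 (Extrapolation).} Apply one-sided Rubio de Francia extrapolation to \eqref{e:A1}. Any $w \in \Atwoup$ admits the Jones-type factorization $w = u_1\,u_2^{-1}$ with $u_1 \in A_1^\uparrow$, $u_2 \in A_1^\downarrow$, and both $A_1$-characteristics controlled by $[w]_{\Atwoup}$. The Rubio de Francia iteration is driven by the sharp one-sided maximal bound \eqref{e:sawyer}, and takes \eqref{e:A1} (together with its downward analog for $T'$ acting on $L^1(u)$ with $u \in A_1^\downarrow$) to
\[
\norm{T}_{L^2(w)\to L^{2,\infty}(w)} + \norm{T'}_{L^2(w^{-1})\to L^{2,\infty}(w^{-1})} \lesssim [w]_{\Atwoup}(1 + \log[w]_{\Atwoup}).
\]
Combining this with Step 1 yields the claimed bound.

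The main obstacle lies in Step 2, where one must verify that the quantitative $t \mapsto t(1+\log t)$ growth of \eqref{e:A1} is preserved by the one-sided extrapolation. The technicality is that upward weights may vanish on a lower half-line (Definition \ref{d:Ap}), so the Rubio de Francia algorithm has to be carried out on $\mathbb H$ alone rather than on all of $\mathbb R$. However, the essential inputs --- the sharp $L^2(w)$ boundedness \eqref{e:sawyer} of $M^\uparrow$ together with the one-sided Jones factorization --- are all available in dimension one, so the sharp-extrapolation calculus of Dragičević-Grafakos-Pereyra-Petermichl transplants with only notational changes.
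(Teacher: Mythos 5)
Your two-step architecture (Lorentz duality to reduce testing to weak-type $(2,2)$, then extrapolation from the weak $(1,1)$ endpoint) is exactly the paper's, and your Step 1 is a correct spelling-out of the ``restricted strong type is dual to weak type'' argument that the paper delegates to \cite{ptv}*{Theorem 2.6}. The problems are in Step 2. First, the Jones-type factorization $w=u_1u_2^{-1}$ is a red herring: factoring the weight does not by itself transfer an operator bound from $L^1(u)$, $u\in A_1^{\uparrow}$, to $L^{2,\infty}(w)$, $w\in\Atwoup$. The mechanism that actually works (and is what the paper uses in Section \ref{ss:extrap}) is to run the Rubio de Francia algorithm on the \emph{dual function}: set $Sh=w^{-1}M^{\downarrow}(|h|w)$, so that $\norm{S}_{L^2(w)}\lesssim[w]_{\Atwoup}$ by the sharp one-sided maximal bound, build $Rh=\sum_k S^kh/(2C[w]_{\Atwoup})^k$, observe that $w\cdot Rh\in A_1^{\uparrow}$ with $[w\cdot Rh]_{A_1^{\uparrow}}\lesssim[w]_{\Atwoup}$, and then bound $\lambda\, w(\Omega)^{1/2}=\lambda\sup_h\int_\Omega hw\le\lambda\sup_h\int_\Omega (Rh)\,w$ by the weak $(1,1)$ estimate applied to the weight $w\cdot Rh$. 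If you intend the factorization route, you must explain how it produces the weak-type $(2,2)$ bound; as written, this step is an appeal to a strong-type extrapolation theorem (Dragičević--Grafakos--Pereyra--Petermichl) that does not apply verbatim to weak-type conclusions.

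Second, and more seriously, the weak $(1,1)$ input you invoke is not available in the generality you need. The weight produced by the algorithm, $w\cdot Rh$, inherits the support of $w$, which for $w\in\Atwoup$ may vanish identically outside a lower half-line $\mathbb H$ (Definition \ref{d:Ap}). The estimate \eqref{e:A1} from \cite{riveros-vidal} is proved only for weights positive a.e.\ on all of $\mathbb R$, so it cannot be quoted here. Extending it to $A_1^{\uparrow}(\mathbb H)$ weights is not a notational change: one must re-prove the sharp reverse H\"older inequality, the Coifman--Fefferman-type bound $\norm{Tf}_{L^p(w)}\lesssim pp'(r')^{1/p'}\norm{f}_{L^p(M^{\downarrow}_{I_0,r}w)}$, and the weak $(1,1)$ bound for $M^{\uparrow}_{I_0}$, all in the localized setting --- this is precisely Theorem \ref{thm:main} and the supporting Lemmas \ref{l:wRH}, \ref{l:Mr-_bound} and Proposition \ref{thm:weak-type}, which occupy all of Section \ref{s:endpoint}. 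Your proposal's final sentence asserts this transplants trivially; the paper treats it as the main technical content behind Theorem \ref{thm:test}, and your proof is incomplete without it.
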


The main step in proving Theorem \ref{thm:test} is a weak-type estimate. For $w$ which are non-vanishing (i.e. $\mathbb H=\mathbb R$), the result we will use (Proposition \ref{prop:weak} below) was proved in \cite{riveros-vidal}. To extend it to the case where $\mathbb H \ne \mathbb R$, we will prove a local weak-type (1,1) estimate (Theorem \ref{thm:main}) in Section \ref{s:endpoint}. From that result, we obtain the following by extrapolation.
\begin{proposition}\label{prop:weak} Let $T$ be an upward mapping CZO. There exists $C>0$ such that for all $w \in \Atwoup$
    \[ \norm{T}_{L^2(w) \to L^{2,\infty}(w)} \le C [w]_{A_2^\uparrow} \log\left( e + [w]_{A_2^\uparrow} \right).\]
\end{proposition}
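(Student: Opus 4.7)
The plan is to deduce the weak-type $L^2(w)$ bound from the weak-type $(1,1)$ endpoint \eqref{e:A1} via a sharp one-sided Rubio de Francia extrapolation. The input estimate, $\norm{T}_{L^1(u) \to L^{1,\infty}(u)} \lesssim [u]_{A_1^{\uparrow}}(1+\log [u]_{A_1^{\uparrow}})$ for $u \in A_1^{\uparrow}$, is due to Lerner--Ombrosi--P\'erez \cite{lerner-a1} and Riveros--Vidal \cite{riveros-vidal} for non-vanishing weights; the authors' localized Theorem \ref{thm:main}, combined with the truncation identity \eqref{e:chi}, upgrades this to the general $A_1^{\uparrow}$ case in which $u$ is allowed to vanish on $\mathbb R \setminus \mathbb H$.

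The engine of the extrapolation is Sawyer's sharp bound \eqref{e:sawyer}, $\norm{M^{\uparrow}}_{L^2(w) \to L^2(w)} \lesssim [w]_{\Atwoup}$. Setting $A := \norm{M^{\uparrow}}_{L^2(w) \to L^2(w)}$, the one-sided Rubio de Francia iterate
\[ R^{\uparrow} g := \sum_{k=0}^{\infty} \frac{(M^{\uparrow})^k g}{(2A)^k} \]
enjoys, for any non-negative $g \in L^2(w)$, the three standard properties: $R^{\uparrow} g \ge g$, $\norm{R^{\uparrow} g}_{L^2(w)} \le 2\norm{g}_{L^2(w)}$, and $M^{\uparrow}(R^{\uparrow} g) \le 2A \cdot R^{\uparrow} g$. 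The last property places $R^{\uparrow} g$ in $A_1^{\uparrow}$ with $[R^{\uparrow} g]_{A_1^{\uparrow}} \le 2A \lesssim [w]_{\Atwoup}$. With these ingredients, the classical proof of sharp weak-type Rubio de Francia extrapolation applies essentially verbatim, with $M$ replaced by $M^{\uparrow}$ and $A_p$ by $A_p^{\uparrow}$: one uses the standard dual characterization $\norm{Tf}_{L^{2,\infty}(w)} \sim \sup_{0 < w(E) < \infty} w(E)^{-1/2} \int_E |Tf|\,dw$ and, for each such $E$, invokes the weak $(1,1)$ endpoint on a suitable $A_1^{\uparrow}$ iterate $v$ with $[v]_{A_1^{\uparrow}} \lesssim [w]_{\Atwoup}$, bounding $\int_E |Tf|\,dw$ in terms of $\norm{f}_{L^2(w)}$ via H\"older's inequality. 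Since $\phi(t) := t(1+\log t)$ is increasing, this delivers the advertised bound $[w]_{\Atwoup}(1+\log [w]_{\Atwoup})$ uniformly in $f$.

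The main obstacle I anticipate is that the standard one-sided extrapolation literature treats only everywhere-positive weights, whereas here $w$ may vanish on $\mathbb R \setminus \mathbb H$. This is resolved by \eqref{e:chi}, which confines the entire iteration to $\mathbb H$: a function supported in $\mathbb H$ is mapped by $M^{\uparrow}$ back into a function supported in $\mathbb H$, and the $A_p^{\uparrow}$ characteristic in Definition \ref{d:Ap} is already defined as a supremum over cubes $I \subset \mathbb H$, so the classical iteration arguments transfer without change. A secondary verification---that the transference from $L^1(u)$ to $L^{2,\infty}(w)$ preserves the dependence $\phi$ on the weight characteristic up to a multiplicative constant---follows from the monotonicity of $\phi$ combined with the sharp iteration bound $[R^{\uparrow} g]_{A_1^{\uparrow}} \lesssim [w]_{\Atwoup}$.
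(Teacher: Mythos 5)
Your overall strategy---Rubio de Francia extrapolation from the localized weak $(1,1)$ endpoint of Theorem \ref{thm:main}---is the same as the paper's, and you correctly identify that the localization to $\mathbb H$ is what handles vanishing weights. However, there is a genuine gap in the construction of the $A_1^{\uparrow}$ majorant. Under the paper's conventions, the condition $M^{\uparrow}v\le Cv$ characterizes $A_1^{\downarrow}$, not $A_1^{\uparrow}$: by \eqref{e:A1-local}, a weight is in $A_1^{\uparrow}$ precisely when $M^{\downarrow}v\le Cv$ (averages taken \emph{above} $x$ are controlled by $v(x)$). So your iterate $R^{\uparrow}g$, which satisfies $M^{\uparrow}(R^{\uparrow}g)\le 2A\,R^{\uparrow}g$, lands in the wrong class and cannot be fed into the weak $(1,1)$ estimate for upward-mapping operators. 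The correct construction, as in the paper, iterates the $w$-conjugated dual operator $Sh=w^{-1}M^{\downarrow}(|h|w)$, whose $L^2(w)$ norm equals $\|M^{\downarrow}\|_{L^2(w^{-1})}\lesssim[w^{-1}]_{\Atwodown}=[w]_{\Atwoup}$ by Sawyer; the resulting weight is $u=w\cdot Rh$, and the identity $M^{\downarrow}(w\,Rh)=w\,S(Rh)\le 2C[w]_{\Atwoup}\,w\,Rh$ is what places $u$ in $A_1^{\uparrow}(\mathbb H)$ with the right constant. This is not merely a sign to flip: the operator being iterated, the space it acts on, and the weight it produces all change.

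A second, related gap is the transference mechanism. Bounding $\int_E|Tf|\,dw$ for arbitrary $E$ from a weak $(1,1)$ hypothesis does not work via H\"older alone: integrating a weak-$L^1$ function over a set of finite measure costs an extra logarithm (or requires a truncation argument you have not supplied). The paper instead fixes $\lambda$, dualizes the superlevel set itself, $w(\Omega_\lambda)^{1/2}=\sup_{\|h\|_{L^2(w)}=1}\int_{\Omega_\lambda}h\,w$, majorizes $hw\le w\,Rh=u$, applies Theorem \ref{thm:main} with the weight $u$ to get $u(\Omega_\lambda)\lesssim\lambda^{-1}[w]_{\Atwoup}\log(e+[w]_{\Atwoup})\|f\|_{L^1(u)}$, and closes with Cauchy--Schwarz, $\|f\|_{L^1(w\,Rh)}\le\|f\|_{L^2(w)}\|Rh\|_{L^2(w)}\le 2$. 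You should replace your $\int_E|Tf|\,dw$ step with this argument; once the majorant is built correctly as above, the rest goes through.
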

The proof of Proposition \ref{prop:weak} is postponed until Section \ref{ss:extrap}. Equipped with Proposition \ref{prop:weak} (and the downward-mapping analogue by duality), Theorem \ref{thm:test} now follows from a standard Lorentz-space argument, namely that restricted strong-type estimates are dual to weak-type; see \cite{ptv}*{Theorem 2.6}. 

Proposition \ref{prop:weak} has been proved in the classical case by Lerner-Ombrosi-Perez in \cite{lerner-a1}. It was proven by Riveros-Vidal in \cite{riveros-vidal} for one-sided CZOs when $\mathbb H = \mathbb R$. In fact, they prove the $L^1$ weighted weak-type estimate. Therefore, we prove Theorem \ref{thm:main} from which Proposition \ref{prop:weak} follows by extrapolation. The extrapolation in the one-sided setting uses the sharp bounds for the one-sided maximal function and the usual Rubio-de-Francia construction \cite{Cuerva_Francia_Weighted_Norm}: \[
R_{}^{\uparrow}h \;=\;
\sum_{k\ge 0}
\bigl(2\,\|M_{}^{\uparrow}\|_{L^{2}(w)}\bigr)^{-k}\,
\bigl(M_{}^{\uparrow}\bigr)^{k}h.
\]

Theorem \ref{thm:A2} is established only in dimension one, since the testing-constant estimate has been proved only in that dimension. In addition, we restrict to dimension one because the proof relies fundamentally on the available results for the one-sided maximal operator.

\section{Localized Weighted weak-type estimate}\label{s:endpoint}

Let $I_0=(a_0,b_0)$ be a fixed interval in $\mathbb{R}$ with $-\infty \le a_0 < b_0 \le \infty$.
For a function $f$ on $I_0$, the localized one-sided (right and left) Hardy-Littlewood maximal operators are defined as
\[
M_{I_0}^{\downarrow}f(x)=\sup_{\substack{h>0\\x+h\leq b_0}}\frac{1}{h}\int_{x}^{x+h}|f(t)|\,dt,\quad x\in I_0,
\]
\[
M_{I_0}^{\uparrow}f(x)=\sup_{\substack{h>0\\x-h\geq a_0}}\frac{1}{h}\int_{x-h}^{x}|f(t)|\,dt,\quad x\in I_0.
\]

Therefore, the suprema are respectively taken over $h>0$ such that the intervals $(x,x+h)$ and $(x-h,x)$ are subsets of $I_0$. For $r \ge 1$, we also denote 
$$M^{\uparrow / \downarrow}_{I_0,r} f = \left(M^{\uparrow / \downarrow}_{I_0} |f|^r \right)^{\frac 1r}.$$

For $x \in I_0$, we have
    \begin{equation}\label{e:MI0} M^{\uparrow}_{I_0,r}f(x) = M^{\uparrow}_{\mathbb R,r} (\chi_{I_0}f)(x).\end{equation}
Hence, many facts about the maximal function immediately follow from the standard case ($I_0=\mathbb R$). In particular, we obtain Lemma \ref{l:Mr-_bound} below. However, the main difference is that in the weight class here, the weight is only assumed to be non-vanishing on $I_0$; in addition, the $A_p^{\uparrow}$ conditions are only imposed on $I \subset I_0$. More precisely, for $p > 1$, we say a positive a.e. function $w$ on $I_0$ belongs to $A_p^{\uparrow}(I_0)$ if there exists $C>0$ such that for all $a<c<x<d<b$ there holds
    \begin{equation}\label{eq:Ap-local} \int_x^d w(y)\, dy \left( \int_c^x w(y)^{\frac{-1}{p-1}} \, dy \right)^{p-1} \le C(d-c)^{p}.\end{equation}
In general, this condition is stronger than simply imposing \eqref{e:Ap} on subintervals of $I_0$. Denote by $[w]_{A_p^{\uparrow}(I_0)}$ the smallest $C$ such that \eqref{eq:Ap-local} holds.

Taking the limit of \eqref{eq:Ap-local} as $p \to 1$ we obtain
    \begin{equation}\label{e:A1-local-2}\frac{1}{d-c}\int_x^d w(y) \, dy \le C \inf_{c<y<x} w(y).\end{equation}
This implies, by taking $c \to x$, that
    \begin{equation}\label{e:A1-local} M^{\downarrow}_{I_0}w(x) \le Cw(x)\end{equation}
for all $x \in I_0$. On the other hand, for any $c<y<x$,
    \[ \frac{1}{d-c} \int_x^d w(t) \, dt \le \frac{1}{d-y} \int_y^d w(t) \, dt \le M^{\downarrow}_{I_0}(y) \le C w(y).\]
Therefore, we define $[w]_{A_1^{\uparrow}(I_0)}$ to be the infimum over all $C$ such that \eqref{e:A1-local} or equivalently \eqref{e:A1-local-2} holds.

\subsection{Preliminaries to the Proof of Theorem \ref{thm:main}}
For our proof of Theorem \ref{thm:main}, we will use the following three facts. The first is the standard weighted endpoint result for the maximal function, which we prove in Section \ref{ss:max} below. 
\begin{proposition}\label{thm:weak-type} 
If $w$ belongs to $A^{\uparrow}_1(I_0)$, then 
    \[ \norm{M^{\uparrow}_{I_0}}_{L^1(w) \to L^{1,\infty}(w)} \lesssim [w]_{A^{\uparrow}_1(I_0)}.\]
\end{proposition}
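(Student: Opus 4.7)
The plan is to adapt the classical Sawyer~\cite{sawyer-one-sided} proof of the weighted weak-type $(1,1)$ bound for the one-sided maximal function to the localized setting, using the pointwise form of the $A_1^{\uparrow}(I_0)$ condition recorded in \eqref{e:A1-local}, namely $M_{I_0}^{\downarrow}w(x)\le[w]_{A_1^{\uparrow}(I_0)}w(x)$ for a.e.\ $x\in I_0$.

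First, fix $\lambda>0$ and $f\in L^1(I_0,w\,dx)$ and set $E_\lambda:=\{x\in I_0:M_{I_0}^{\uparrow}f(x)>\lambda\}$, a relatively open subset of $I_0$. Apply the one-sided rising-sun lemma inside $I_0$ to decompose
\[
E_\lambda=\bigsqcup_{k}(a_k,b_k),
\]
a pairwise disjoint union of open subintervals of $I_0$ on each of which
\[
\int_{a_k}^{b_k}|f|\,dt=\lambda(b_k-a_k)\qquad\text{and}\qquad \int_{a_k}^{x}|f|\,dt\ge\lambda(x-a_k)\ \text{for all } x\in(a_k,b_k).
\]

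The key reduction is to establish the componentwise inequality
\[
w((a_k,b_k))\;\lesssim\;[w]_{A_1^{\uparrow}(I_0)}\,\lambda^{-1}\int_{a_k}^{b_k}|f|\,w\,dx,
\]
since summing over the disjoint components then immediately gives the claim. Applying \eqref{e:A1-local} at $x=a_k$ with $h=b_k-a_k$ gives the crude bound $w((a_k,b_k))\le[w]_{A_1^{\uparrow}(I_0)}(b_k-a_k)w(a_k)$; the task is to upgrade the factor $(b_k-a_k)w(a_k)$ into a multiple of $\lambda^{-1}\int_{a_k}^{b_k}|f|w$. To do this, I would dyadically decompose each component at its left endpoint using the nested subintervals $I_k^{(j)}:=(a_k,a_k+2^{-j}(b_k-a_k))$, and on each annulus $I_k^{(j-1)}\setminus I_k^{(j)}$ apply the equivalent form \eqref{e:A1-local-2} to trade a geometric factor of $2^{-j}$ against $w(a_k)$, pairing it with the matching $f$-mass guaranteed by the rising-sun lower bound $\int_{I_k^{(j)}}|f|\ge\lambda|I_k^{(j)}|$.

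The main obstacle is this weight-conversion step. The class $A_1^{\uparrow}(I_0)$ is strictly larger than classical $A_1$ -- the weight $w(x)=1/x$ on $(0,\infty)$ belongs to $A_1^{\uparrow}$ with constant $1$ yet fails classical $A_1$ -- so the textbook substitution of $w(a_k)$ by $\operatorname{ess\,inf}_{(a_k,b_k)}w$ is unavailable. One must exploit the directional compatibility between $M^{\uparrow}$ (forward averaging of $|f|$ from $a_k$) and $A_1^{\uparrow}$ (control of forward $w$-averages by the value at $a_k$): both are ``upward''-oriented in the same precise sense, and it is this alignment that ultimately yields the linear dependence on $[w]_{A_1^{\uparrow}(I_0)}$.
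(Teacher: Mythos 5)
Your setup coincides with the paper's: the rising--sun decomposition of $\{M^{\uparrow}_{I_0}f>\lambda\}$ into components $(a_k,b_k)$ satisfying $\int_{a_k}^{t}|f|\ge\lambda(t-a_k)$ for all $t\in(a_k,b_k)$, followed by the componentwise claim $w((a_k,b_k))\lesssim[w]_{A_1^{\uparrow}(I_0)}\lambda^{-1}\int_{a_k}^{b_k}|f|w$. You correctly identify the weight--conversion step as the crux, but the dyadic mechanism you propose for it does not close. Writing $x_j=a_k+2^{-j}(b_k-a_k)$, condition \eqref{e:A1-local-2} gives $\int_{x_j}^{x_{j-1}}w\le C\,2^{-j+1}|I_k|\,\inf_{(a_k,x_j)}w$, while the rising--sun bound gives $\int_{a_k}^{x_j}|f|w\ge\lambda\,2^{-j}|I_k|\,\inf_{(a_k,x_j)}w$; combining, $\int_{x_j}^{x_{j-1}}w\lesssim\lambda^{-1}\int_{a_k}^{x_j}|f|w$, with a constant independent of $j$ --- the geometric factors cancel exactly rather than leaving a summable gain. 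The real trouble is that the sets $(a_k,x_j)$ carrying the $f$-mass are \emph{nested}, not disjoint: summing over $j$ counts a point in the $m$-th annulus with multiplicity $m-1$, so the total is $\lesssim\lambda^{-1}\sum_m m\int_{x_m}^{x_{m-1}}|f|w$, which is not controlled by $\int_{a_k}^{b_k}|f|w$ when the $f$-mass concentrates near $a_k$. The rising--sun hypothesis gives no lower bound on the $f$-mass of individual annuli, so no legitimate annulus-to-annulus pairing is available.

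The paper closes this gap by monotonization rather than dyadic decomposition: it introduces the decreasing auxiliary weight $\widetilde w(x)=\min_{a_k\le y\le x}M^{\downarrow}(\chi_{I_k}w)(y)$ and invokes two lemmas of Mart\'in-Reyes--Ortega--de la Torre: first, $w(I_k)\le\widetilde w(I_k)$; second, for a \emph{decreasing} weight the rising--sun condition yields $\int_{I_k}f\widetilde w\ge\lambda\int_{I_k}\widetilde w$ (a layer-cake argument: superlevel sets of a decreasing function are initial segments $(a_k,t)$, precisely the sets on which the $f$-mass is bounded below). Since $\widetilde w\le M^{\downarrow}_{I_0}w\le[w]_{A_1^{\uparrow}(I_0)}w$ pointwise, the componentwise inequality follows with linear dependence on the characteristic. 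Some device of this kind --- a decreasing majorant plus summation by parts --- is needed; as written, your argument has a genuine gap at exactly the step you flagged as the main obstacle.
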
  

The second ingredient we require is the sharp reverse H\"older inequality. The general global inequality is given in \cite{riveros-vidal}*{Lemma 3.1}. For the case of localized $A^{\uparrow}_1(I_0)$ weights, we present a shorter proof.
\begin{lemma}\label{l:wRH} There exists an absolute constant $C>0$ such that for any $w\in A_{1}^{\uparrow}(I_0)$,
 there holds
\begin{equation}\label{e:wRH}
\int_{a}^{b}w^{r}\leq 2\left(M_{I_0}^{\downarrow}(w\chi_{(a,b)})(b)\right)^{r-1}\int_{a}^{b}w, \quad r = 1 + \frac{1}{C[ w ]_{A_{1}^{\uparrow}(I_0)}},
\end{equation}
for every bounded interval $(a,b)\subset I_0$. In particular, for every $x \in I_0$,
\begin{equation}\label{e:ptwRH}
M_{I_0,r}^{\downarrow}(w\chi_{(a,b)})(x)\leq 2M_{I_0}^{\downarrow}(w\chi_{(a,b)})(x) \leq 2[w]_{A_1^{\downarrow}(I_0)} w(x).
\end{equation}

\end{lemma}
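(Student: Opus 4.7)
My plan is to prove \eqref{e:wRH} by a layer-cake expansion coupled to a one-sided Calder\'on--Zygmund (sunrise) decomposition of the super-level sets of $M := M_{I_0}^{\downarrow}(w\chi_{(a,b)})$, using the $A_1^{\uparrow}$ condition to extract geometric decay in the level $\lambda$. Set $K := [w]_{A_1^{\uparrow}(I_0)}$ and let $\lambda_0$ denote the maximal value on the right-hand side of \eqref{e:wRH}, which by choice of the distinguished endpoint dominates $\frac{1}{b-a}\int_a^b w$. Writing
\[
\int_a^b w^r\,dx \;=\; (r-1)\int_0^\infty \lambda^{r-2}\,F(\lambda)\,d\lambda, \qquad F(\lambda) := \int_{(a,b)\cap\{w>\lambda\}}w,
\]
and splitting the $\lambda$-integral at $\lambda_0$, the part $\lambda \le \lambda_0$ already produces the main term $\lambda_0^{r-1}\int_a^b w$, so the entire task is to bound the tail $(r-1)\int_{\lambda_0}^\infty \lambda^{r-2}F(\lambda)\,d\lambda$ by the same quantity.

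For the tail, Lebesgue differentiation gives $\{w > \lambda\}\cap(a,b) \subset \{M > \lambda\}$ almost everywhere, and choosing $\lambda > \lambda_0$ ensures that the distinguished endpoint is not in $\{M > \lambda\}$. The one-sided sunrise lemma then expresses $\{M > \lambda\}$ as a disjoint union of intervals $(a_j^\lambda, b_j^\lambda)$ with $\int_{a_j^\lambda}^{b_j^\lambda} w \le C\lambda\,(b_j^\lambda - a_j^\lambda)$. Invoking \eqref{eq:Ap-local} with $d = b_j^\lambda$, $x = a_j^\lambda$, and $c$ taken in the gap just to the left of $a_j^\lambda$ forces $w \gtrsim \lambda/K$ on an adjacent left interval of comparable length. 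These left shadows are pairwise disjoint and disjoint from $\{M > \lambda\}$, and comparing the total $w$-mass in them with the $w$-mass inside the sunrise intervals gives the key geometric decay
\[
F(2\lambda) \;\le\; (1 - c_0/K)\,F(\lambda), \qquad \lambda \ge \lambda_0,
\]
for a universal constant $c_0 > 0$.

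Iterating across dyadic levels and integrating the layer-cake tail produces
\[
(r-1)\int_{\lambda_0}^\infty \lambda^{r-2}F(\lambda)\,d\lambda \;\lesssim\; \lambda_0^{r-1}\!\int_a^b w \,\cdot\, (r-1)\sum_{k\ge 0} 2^{k(r-1)}(1 - c_0/K)^k,
\]
and the sum is bounded by $1$ as soon as $r - 1 \le c_0/(CK)$ for $C$ sufficiently large, establishing \eqref{e:wRH} with $r = 1 + 1/(CK)$. The pointwise consequence \eqref{e:ptwRH} then follows: applying \eqref{e:wRH} to each interval $(x,x+h) \subset I_0$, dividing by $h$, and taking the supremum in $h$ yields the first inequality $M_{I_0,r}^{\downarrow}(w\chi_{(a,b)}) \le 2\,M_{I_0}^{\downarrow}(w\chi_{(a,b)})$, while the pointwise $A_1^{\uparrow}$ bound $M_{I_0}^{\downarrow}w \le K w$ recorded in \eqref{e:A1-local} supplies the second.

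The main obstacle is extracting the $c_0/K$ improvement over the standard CZ halving $|\{M > 2\lambda\}| \le |\{M > \lambda\}|/2$; without the gain of $c_0/K$, the geometric sum above diverges at every $r > 1$. The gain comes from the $w$-reservoir that $A_1^{\uparrow}$ places in each gap just to the left of a sunrise component, and the delicate part is the bookkeeping: one must avoid double-counting gaps shared between adjacent components and correctly handle the outermost sunrise component when it abuts the boundary of $(a,b)$, since in that case the sunrise equality $\int w = \lambda\cdot \text{length}$ must be replaced by an inequality.
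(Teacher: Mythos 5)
Your overall skeleton --- the layer-cake expansion of $\int_a^b w^r$ in terms of $F(\lambda)=\int_{(a,b)\cap\{w>\lambda\}}w$, the split at $\lambda_0$, the sub-$\lambda_0$ range giving the main term --- is the same as the paper's. The gap is in the tail estimate: the claimed geometric decay $F(2\lambda)\le(1-c_0/K)F(\lambda)$ for $\lambda\ge\lambda_0$ is false. Take $I_0=(a,b)=(0,1)$ and $w=1+(N-1)\chi_E$ with $E=(\tfrac12,\tfrac12+\epsilon)$ and $N\epsilon\le 1$. Then $K=[w]_{A_1^{\uparrow}(I_0)}\sim N$ and $\lambda_0\le 3$, but $\{w>\lambda\}=E$ for all $\lambda\in[1,N)$, so $F$ is \emph{constant} on $[1,N)$ and $F(2\lambda)=F(\lambda)>0$ throughout $\lambda_0\le\lambda<N/2$, which contradicts any decay factor $1-c_0/K<1$ over the $\sim\log N$ dyadic generations in that range. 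Your proposed mechanism cannot repair this: the left-shadow reservoirs extracted from \eqref{eq:Ap-local} carry $w\gtrsim\lambda/K$, but they lie outside $\{M_{I_0}^{\downarrow}(w\chi_{(a,b)})>\lambda\}$, where $w\le\lambda$ a.e.; hence that mass sits in $\{w\le\lambda\}$ and contributes nothing to $F(\lambda)-F(2\lambda)=\int_{\{\lambda<w\le 2\lambda\}}w$. A level-comparison of this kind can only gain when the level drops by a factor comparable to $K$, not by a factor of $2$.

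That level drop by $K$ is exactly what the paper uses instead: from $\avg{w}_{I_j}=\lambda$ on the sunrise components of $\Omega_\lambda=\{M_{I_0}^{\downarrow}w>\lambda\}$ together with the pointwise bound $M_{I_0}^{\downarrow}w\le Kw$, one gets the change-of-level inequality $F(\lambda)\le w(\Omega_\lambda)=\lambda|\Omega_\lambda|\le\lambda\,|\{x\in(a,b):w(x)\ge\lambda/K\}|$; feeding this into the tail of the layer-cake integral produces a term of size $(r-1)K^{r}\int_a^b w^r$ up to constants, which is absorbed into the left-hand side once $r-1\le c/K$. No iteration in $\lambda$ and no disjointness bookkeeping for shadows is needed. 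If you insist on a dyadic-decay route, it must be run on $G(\lambda)=w(\{M_{I_0}^{\downarrow}(w\chi_{(a,b)})>\lambda\})$ (using $w^r\le w\,(M_{I_0}^{\downarrow}(w\chi_{(a,b)}))^{r-1}$ a.e.) rather than on $F$, and then the two issues you flag at the end (shadows not fitting into gaps between components, and components abutting the endpoint) become essential rather than cosmetic. Your deduction of \eqref{e:ptwRH} from \eqref{e:wRH} is fine.
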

\begin{proof}
    Let $I=(a,b) \subset I_0$ and $\lambda \ge \lambda_0 = M^{\downarrow}_{I_0}(\chi_I w)(b)$. Write $\Omega_\lambda = \{ x \in I : M_{I_0}^{\downarrow}(w) > \lambda\}$ as a union of disjoint open intervals $I_j$. For such $I_j$ (cf. \cite{sawyer-one-sided}*{Lemma 2.1}), we have
        \[ \avg{w}_{I_j} = \lambda. \]
    Therefore,
\begin{equation}\label{e:level} w(x \in I : w(x) > \lambda) \le w(\Omega_\lambda) = \lambda |\Omega_\lambda| \le \lambda |x \in I : w(x) \ge \beta \lambda|, \quad \beta = [w]_{A_1^{\uparrow}(I_0)}^{-1}.\end{equation}
For $1<r<2$, we perform the following decomposition
        \[ \int_I w^r = (r-1)\int_0^\infty \lambda^{r-2}w(x \in I : w(x) > \lambda) \, d\lambda, \]
    and consider the integration regions where $\lambda < \lambda_0$ and $\lambda \ge \lambda_0$. The integral over the first region is controlled by the left-hand side in \eqref{e:wRH}. For the second region, we apply \eqref{e:level} to obtain
        \[ \int_{\lambda_0}^\infty \lambda^{r-2}w(x \in I : w(x) > \lambda) \, d\lambda \le r\int_0^\infty \lambda^{r-1} |x \in I : w(x) > \beta \lambda| \, d \lambda = \beta^{-1} \int_I w^r.\]
    Provided that $\frac{r-1}{\beta}$ is smaller than a fixed absolute constant, \eqref{e:wRH} follows. This requirement is precisely the condition imposed on the choice of $r$, which proves the result.
\end{proof}

\begin{lemma}\label{l:Mr-_bound} Let $T$ be an upward-mapping CZO. There exists $C>0$ such that for all $1<p<\infty$, $1<r<2$, and weights $w$ on $I_0$,
\begin{equation*}
\left\|Tf\right\|_{L^{p}(w,I_0)}\leq C \,p p'(r')^{\frac{1}{p'}}\left\|f\right\|_{L^{p}(M_{I_0,r}^{\downarrow}w,I_0)}.
\end{equation*} 
\end{lemma}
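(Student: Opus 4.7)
The lemma is the one-sided analogue of P\'erez's sharp weighted norm inequality, and I would prove it by adapting the classical template: duality, Rubio--de--Francia iteration, sharp reverse H\"older inequality, and the sharp unweighted $L^r \to L^r$ bound for $T$. I begin by using $L^p(w,I_0)$-duality to select a nonnegative $\phi$ with $\norm{\phi}_{L^{p'}(w^{1-p'},I_0)} = 1$ so that
\[
\norm{Tf}_{L^p(w,I_0)} \;=\; \int_{I_0} (Tf)\,\phi \, dx \;=\; \int_{I_0} f \cdot T'\phi \, dx,
\]
where the second equality uses the duality of the kernel (and $T'$ is downward-mapping).

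Next I apply H\"older's inequality with respect to the target weight $u = M^{\downarrow}_{I_0,r} w$:
\[
\int_{I_0} f \cdot T'\phi \, dx \;\le\; \norm{f}_{L^p(u,I_0)} \cdot \norm{T'\phi}_{L^{p'}(u^{1-p'},I_0)},
\]
reducing the claim to the two-weight estimate $\norm{T'\phi}_{L^{p'}(u^{1-p'},I_0)} \lesssim pp'(r')^{1/p'}$ under the normalization $\norm{\phi}_{L^{p'}(w^{1-p'},I_0)} = 1$. Since $T'$ is downward-mapping, the natural weight class here is $A_{p'}^{\downarrow}(I_0)$, so I would perform the Rubio--de--Francia iteration with the downward maximal operator $M^{\downarrow}_{I_0}$ on the appropriate $L^{p'}$ space (after dualizing the target norm), producing a dominating function lying in $A_1^{\downarrow}(I_0)$ whose characteristic is controlled by a sharp one-sided Muckenhoupt bound. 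Combining this with the sharp reverse H\"older inequality of Lemma~\ref{l:wRH}---which provides the exponent $r \sim 1 + 1/[\Phi]_{A_1^{\downarrow}(I_0)}$---together with the sharp unweighted bound $\norm{T}_{L^r \to L^r} \lesssim r'$ (a classical feature of one-dimensional CZ theory) collapses the two-weight estimate into the targeted form. The factor $(r')^{1/p'}$ emerges from the reverse-H\"older exponent, while $pp'$ arises from the sharp Muckenhoupt bound of the one-sided maximal operator (Proposition~\ref{thm:weak-type}, interpolated to $L^{p'}$, together with its dual).

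\noindent\textbf{Main obstacle.} The essential difficulty is the direction mismatch: $T$ is upward-mapping while the upper bound involves $M^{\downarrow}$, and the weight $w$ is not assumed to belong to any one-sided Muckenhoupt class. Passing through $T'$ by duality makes the direction compatible, since $T'$ is downward-mapping and therefore naturally paired with $M^{\downarrow}$; but each subsequent step---Rubio--de--Francia, reverse H\"older, and the sharp $L^r$ bound---must be performed consistently in the downward direction, and the sharp constants $pp'(r')^{1/p'}$ must be tracked through every stage. This directional bookkeeping, absent in the classical symmetric setting, is the most delicate feature of the argument.
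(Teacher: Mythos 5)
The paper does not prove this lemma from scratch: it is quoted from Riveros--Vidal \cite{riveros-vidal}*{Theorem 4.1}, which is the global case $I_0=\mathbb R$ (the one-sided version of the Lerner--Ombrosi--P\'erez estimate $\|Tf\|_{L^p(w)}\le Cpp'(r')^{1/p'}\|f\|_{L^p(M_r^{\downarrow}w)}$), and the localized statement follows simply by applying that result to $\chi_{I_0}f$ and $\chi_{I_0}w$ and invoking the restriction identity \eqref{e:MI0} (in its downward form). So the only content the paper supplies here is the localization, which you do not address; instead you set out to reprove the global estimate.

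As a blind reconstruction of that global proof, your skeleton is the right one: the opening duality step, reducing the claim to
\[
\bigl\|T'\phi\bigr\|_{L^{p'}\left(\left(M^{\downarrow}_{I_0,r}w\right)^{1-p'}\right)}\lesssim pp'(r')^{\frac{1}{p'}}\,\|\phi\|_{L^{p'}(w^{1-p'})},
\]
is exactly the standard reduction, and your directional bookkeeping ($T'$ downward-mapping, hence paired with $M^{\downarrow}$) is correct. The problem is that the sentence asserting that Rubio--de--Francia, reverse H\"older, and the unweighted $L^r$ bound ``collapse the two-weight estimate into the targeted form'' is precisely where the entire content of the lemma lives, and no argument is given there. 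Worse, the one mechanism you do name concretely is misattributed. In this lemma $w$ is an \emph{arbitrary} weight and $r\in(1,2)$ is an \emph{arbitrary given} parameter, whereas Lemma \ref{l:wRH} requires a one-sided $A_1$ weight and \emph{produces} a specific exponent tied to its characteristic; it therefore cannot be applied to $w$, and the factor $(r')^{1/p'}$ does not come from a reverse-H\"older exponent. In the known proofs it is generated on the maximal-function side of the dual two-weight estimate (H\"older with exponent $r$ against $M^{\downarrow}_{r}w$, together with the sharp $L^{p'}$ bounds for the one-sided maximal operator), while reverse H\"older is applied to the Rubio--de--Francia majorant, and the choice $r\sim 1+1/(C[w]_{A_1^{\uparrow}(I_0)})$ is made only later, in the \emph{application} of this lemma inside the proof of Theorem \ref{thm:main}. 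Conflating these two roles of $r$ would derail exactly the constant-tracking that the statement is designed to deliver.
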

Lemma \ref{l:Mr-_bound} directly follows from the case $I_0=\mathbb R$ in \cite{riveros-vidal}*{Theorem 4.1} by restricting to $f$ and $w$ which are supported in $I_0$ and using \eqref{e:MI0}.

\subsection{Weighted weak-type estimate for CZOs}
We now present the main weighted weak-type estimate.

\begin{theorem}\label{thm:main}
Let $T$ be an upward-mapping CZO. Then there exists $C>0$ such that for all $w\in A_{1}^{\uparrow}(I_0)$,
\begin{equation*}
\left\|T (\chi_{I_0} f)\right\|_{L^{1,\infty}(w,I_0)}\leq C[w]_{A_{1}^{\uparrow}(I_0)}\log\left(e+[w]_{A_{1}^{\uparrow}(I_0)}\right)\left\|f\right\|_{L^{1}(w,I_0)}.
\end{equation*}
\end{theorem}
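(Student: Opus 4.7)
The plan is to follow the Lerner--Ombrosi--P\'erez approach (as adapted to the one-sided setting by Riveros--Vidal), verifying that the localized tools Proposition~\ref{thm:weak-type}, Lemma~\ref{l:wRH}, and Lemma~\ref{l:Mr-_bound} suffice in place of their global counterparts. Fix $\lambda>0$ and decompose the level set $\Omega_\lambda = \{x \in I_0 : M_{I_0}^\uparrow f(x) > \lambda\}$ into its maximal connected components $\Omega_\lambda = \bigcup_j I_j$ with $I_j = (a_j, b_j)$. Proposition~\ref{thm:weak-type} gives the packing bound
\[
\sum_j w(I_j) = w(\Omega_\lambda) \lesssim [w]_{A_1^\uparrow(I_0)}\,\lambda^{-1}\|f\|_{L^1(w,I_0)}.
\]
Perform the one-sided Calder\'on--Zygmund decomposition $f = g + b$ with $g$ satisfying $\|g\|_\infty \lesssim \lambda$ and $\|g\|_{L^1(w)}\leq\|f\|_{L^1(w)}$, and $b = \sum_j b_j$ where each $b_j$ is supported in $I_j$, has mean zero, and $\|b_j\|_1 \lesssim \lambda|I_j|$.

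For the good part, take the sharp reverse H\"older exponent $r = 1 + 1/(C[w]_{A_1^\uparrow(I_0)})$ of Lemma~\ref{l:wRH}, so that $r' \simeq [w]_{A_1^\uparrow(I_0)}$ and $M_{I_0,r}^\downarrow(w\chi_J) \lesssim [w]_{A_1^\uparrow(I_0)}\,w$ pointwise for bounded $J\subset I_0$. Choose $p = 1 + 1/\log(e+[w]_{A_1^\uparrow(I_0)})$. Lemma~\ref{l:Mr-_bound} then gives
\[
\|Tg\|_{L^p(w,I_0)} \lesssim pp'(r')^{1/p'}\|g\|_{L^p(M_{I_0,r}^\downarrow w,I_0)} \lesssim pp'\,[w]_{A_1^\uparrow(I_0)}\,\|g\|_{L^p(w,I_0)},
\]
and using $\|g\|_{L^p(w)}^p \leq \lambda^{p-1}\|f\|_{L^1(w)}$ together with Chebyshev produces, thanks to $(pp')^p\simeq \log(e+[w]_{A_1^\uparrow(I_0)})$ and $[w]_{A_1^\uparrow(I_0)}^p \simeq [w]_{A_1^\uparrow(I_0)}$ for this $p$,
\[
\lambda\,w\bigl(\{|Tg|>\lambda/2\}\bigr) \lesssim [w]_{A_1^\uparrow(I_0)}\log\bigl(e+[w]_{A_1^\uparrow(I_0)}\bigr)\|f\|_{L^1(w)}.
\]

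For the bad part, the upward-mapping kernel gives $Tb_j(x) = 0$ for $x\le a_j$, so only a right-enlargement $I_j^* = (a_j, b_j+\ell(I_j))$ matters. On the complement of $\bigcup_j I_j^*$, the mean-zero property of $b_j$ together with \eqref{e:smooth} yields $|Tb_j(x)| \lesssim \|b_j\|_1 \ell(I_j)^\epsilon/|x-c(I_j)|^{1+\epsilon}$; summing in $j$, the resulting Poisson tails paired against $w$ are controlled via $M_{I_0}^\downarrow w \lesssim [w]_{A_1^\uparrow(I_0)}\,w$ from \eqref{e:A1-local}. For $w\bigl(\bigcup_j I_j^*\bigr)$, the right-buffer $(b_j, b_j+\ell(I_j))$ is contained in $\{M_{I_0}^\uparrow f > \lambda/C\}$ because the average of $|f|$ over $(a_j,x)$ for $x$ in the buffer is comparable to $\lambda$, so Proposition~\ref{thm:weak-type} applied at the comparable level again gives a linear-in-$[w]_{A_1^\uparrow(I_0)}$ bound. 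The logarithmic factor is then extracted by an $L^p$ optimization on $Tb$ with the same exponent $p$, converting the would-be factor $[w]_{A_1^\uparrow(I_0)}^2$ into $[w]_{A_1^\uparrow(I_0)}\log(e+[w]_{A_1^\uparrow(I_0)})$.

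The main technical obstacle lies in this last step: a naive near/far split of $Tb$ produces only the quadratic bound $[w]_{A_1^\uparrow(I_0)}^2$, and obtaining the sharper logarithmic dependence requires the delicate $p\simeq 1+1/\log(e+[w]_{A_1^\uparrow(I_0)})$ optimization used for $g$, now combined with the sharp packing of the stopping intervals, mirroring Riveros--Vidal's global one-sided argument. The localization to $I_0$ causes no new essential difficulty, since each ingredient (weak-type of $M_{I_0}^\uparrow$, sharp reverse H\"older, weighted $L^p$ estimate via $M_{I_0,r}^\downarrow w$) has been given a localized $A_1^\uparrow(I_0)$-adapted analogue in Section~\ref{s:endpoint}.
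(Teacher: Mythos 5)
Your overall strategy is the right one and matches the paper's: a one-sided Calder\'on--Zygmund decomposition at height $\lambda$ along the level set of $M_{I_0}^{\uparrow}f$, a right-enlargement $\tilde\Omega$ handled by Proposition~\ref{thm:weak-type}, the bad part by cancellation and the kernel's upward support, and the good part by Lemma~\ref{l:Mr-_bound} with $r=1+1/(C[w]_{A_1^\uparrow(I_0)})$ and $p=1+1/\log(e+[w]_{A_1^\uparrow(I_0)})$. However, there is a genuine gap in your good-part estimate. You pass from $\|g\|_{L^p(M^{\downarrow}_{I_0,r}w)}$ to $[w]_{A_1^\uparrow(I_0)}^{1/p}\|g\|_{L^p(w)}$ and then invoke $\|g\|_{L^p(w)}^p\le \lambda^{p-1}\|f\|_{L^1(w)}$, which silently requires $\|g\|_{L^1(w)}\le\|f\|_{L^1(w)}$. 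That inequality is false in general: on each $I_j$ one has $g=\langle f\rangle_{I_j}$ (a Lebesgue average), and $\langle f\rangle_{I_j}\,w(I_j)$ can only be compared to $\int_{I_j}f\,w$ by invoking the $A_1^\uparrow$ condition, which costs an extra factor of $[w]_{A_1^\uparrow(I_0)}$ and would degrade the final bound to quadratic. The correct route (which the paper follows, as in Lerner--Ombrosi--P\'erez) is to keep the weight $M^{\downarrow}_{I_0,r}(w\chi_{I_0\setminus\tilde\Omega})$, observe that because this truncated maximal function only sees mass to the right of $d_j$ it is essentially constant on each $I_j$ up to \emph{absolute} constants, replace $\langle f\rangle_{I_j}$ by $f$ at that stage, and only then apply $M^{\downarrow}_{I_0,r}w\lesssim[w]_{A_1^\uparrow(I_0)}w$ from Lemma~\ref{l:wRH}.

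Your closing paragraph also misattributes the source of the logarithm. The Poisson-tail estimate you gave for $Tb$ off $\bigcup_j I_j^*$, combined with \eqref{e:A1-local}, already yields a bound linear in $[w]_{A_1^\uparrow(I_0)}$ for the bad part; there is no quadratic loss there to repair, and no viable $L^p$ optimization on $Tb$ anyway, since $b$ has no useful $L^\infty$ bound and $\|b\|_{L^p}$ is not controlled by $\lambda^{(p-1)/p}\|f\|_{L^1}^{1/p}$. The logarithm enters solely through the factor $(pp'(r')^{1/p'})^p$ in the good-part estimate. Your treatment of $w(\bigcup_j I_j^*)$ via containment of the right buffer in $\{M_{I_0}^{\uparrow}f>\lambda/2\}$ is a correct minor variant of the paper's direct bound $w(I_j^+)\le 3|I_j|\,M^{\downarrow}_{I_0}w(x)$ for $x\in I_j$.
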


\begin{proof}
Suppose $f\geq0$ and $f\in L^\infty_c(I_0)$; that is, $f$ is bounded and compactly supported in $I_0$.
Introduce
\[
\Omega=\{x\in I_0:M_{I_0}^{\uparrow}f(x)>\lambda\}=\bigcup_jI_j,
\]
where $I_j=(a_j,b_j)$ are the maximal intervals on which $M_{I_0}^{\uparrow}f>\lambda$, satisfying
\[
\frac{1}{|I_j|}\int_{I_j}f(y)\,dy=\lambda.
\]
Define $I_{j}^{+}=(b_{j},d_{j})$, where
\[
d_{j}=\min\left(b_0,\,b_{j}+2|I_{j}|\right).
\]
Thus $I_j^+=(b_j,b_j+2|I_j|)$ if $b_j+2|I_j|\leq b_0$, and otherwise $I_j^+=(b_j,b_0)$. Put $\widetilde{I}_{j}=I_j\cup I_j^{+}$ and $\tilde{\Omega}=\bigcup_j\widetilde{I}_{j}$. 

Decompose $f$ as $f=g+h$, where
\[
g=f\chi_{I_0\setminus\Omega}+\sum_j\lambda\chi_{I_j},\quad h=\sum_j(f-\lambda)\chi_{I_j}.
\]

Note that $g\leq\lambda$ a.e. on $I_0$, $h$ has mean zero on each $I_j$, and $\sum_j \norm{h_j}_{L^1} \le 2 \norm{f}_{L^1}$. We bound $w(\{x\in I_0:|T_{I_0}f(x)|>\lambda\})$ by splitting it into three pieces:
\[
w(\{x:|T_{I_0}f(x)|>\lambda\})\leq w(\tilde{\Omega})+w(E_1)+w(E_2),
\]
where
\[
E_1=\{x\in I_0\setminus\tilde{\Omega}:|T_{I_0}h(x)|>\lambda/2\},\quad
E_2=\{x\in I_0\setminus\tilde{\Omega}:|T_{I_0}g(x)|>\lambda/2\}.
\]
The estimate with $w(\tilde{\Omega})$ relies on the weak-type estimate for $M^{\uparrow}_{I_0}$ in Proposition \ref{thm:weak-type} and the observation that $w(I_j^+) \le 3|I_j| M^{\downarrow}_{I_0}w(x)$ for every $x \in I_j$. Therefore,
    \[ w(\tilde \Omega) = \sum_j w(I_j) + w(I_J^+) \le w(\Omega) + \sum_j \frac{3}{\lambda} \int_{I_j} f(x) M^{\downarrow}_{I_0} w(x) \, dx \lesssim \frac{[w]_{A^{\uparrow}_1(I_0)}}{\lambda} \norm{f}_{L^1(w)}.\] 
We next show that using the cancellation of $h_j$ and the smoothness of the kernel \eqref{e:smooth}, that $w(E_1)$ obeys the same bound as $w(\tilde\Omega)$ above. Since the function $h_{j}$ is supported in $I_{j}$, for $x \in I_0 \backslash \tilde \Omega$, we only need to estimate $Th_j(x)$ when $x > d_j$. We have the inequality:
    \[ \chi_{I_0 \backslash \tilde \Omega}(x)|Th_j(x)| \le C \chi_{[d_j,b_0)}(x) \int_{I_j} \frac{|y-b_j|^\delta}{|x-b_j|^{1+\delta}} |h_j(y)| \, dy.\]
By splitting into dyadic annuli and recalling the precise form of $h_j$,
    \[ \norm{ Th_j}_{L^1(I_0 \backslash \tilde \Omega ,w)} \lesssim \inf_{I_j} M^{\downarrow}_{I_0}(\chi_{I_0 \backslash \tilde I_j} w) \norm {h_j}_{L^1} \lesssim [w]_{A_1^{\uparrow}(I_0)} \norm{f}_{L^1(I_j,w)}.\]
The claimed bound for $w(E_1)$ follows by summing over $j$ and applying Chebyshev's inequality. Finally, we prove the estimate for $ w(E_2)$, which will rely on Lemma \ref{l:wRH} and Lemma \ref{l:Mr-_bound}. We first use Chebyshev's inequality and  Lemma~\ref{l:Mr-_bound} for the localized $L^{p}$ bound for upward‑mapping operators with $r=1+\dfrac{1}{C[ w]_{A_{1}^{\uparrow}(I_0)}}$ and $p = 1 + \frac{1}{\log(e + [ w]_{A_{1}^{\uparrow}(I_0)})}$, both lying in the interval $(1,2)$. These choices of parameters force the factor $(pp'(r')^{\frac{1}{p'}})^{p}$ from Lemma \ref{l:Mr-_bound} to be no larger than $\log \left ( e + [w]_{A_1^{\uparrow}(I_0)} \right)$. Indeed, $\left[(r')^{\frac{1}{p'}}\right]^p$, $p^p$, and $(p')^{p-1}$ are each bounded by an absolute constant, which implies that the main term is $p' \sim \log \left ( e + [w]_{A_1^{\uparrow}(I_0)} \right)$. Furthermore, using that $|g| \le \lambda$, the theorem will be proven if we establish that
    \[ \int_{I_0} |g(x)| M^{\downarrow}_{I_0;r} (w \chi_{I_0 \backslash \tilde \Omega} )(x) \, dx \lesssim [w]_{A_1^{\uparrow}(I_0)}  \norm{f}_{L^1(w)}.\] 
Since $g=f$ outside $\Omega$, from Lemma \ref{l:wRH} we obtain
    \[ \int_{I_0 \backslash \Omega} |g(x)| M^{\downarrow}_{I_0;r} (w \chi_{I_0 \backslash \tilde \Omega} )(x) \, dx \le \int_{I_0 \backslash \Omega} |f(x)| M^{\downarrow}_{I_0;r} (w)(x) \, dx \lesssim [w]_{A_1^{\uparrow}(I_0)}  \norm{f}_{L^1(w)}.\]  
Inside each $I_j$, $g=\lambda=\avg{f}_{I_j}$. Consequently, for any $x,y \in I_j$, 
$$M^{\downarrow}_{I_0;r}(w \chi_{I_0 \backslash \tilde \Omega} w)(x) \lesssim M^{\downarrow}_{I_0;r}(w \chi_{I_0 \backslash \tilde \Omega} w)(y).$$ 
Then, we proceed as in the previously displayed formula. 
The bounds for $w(\tilde{\Omega})$, $w(E_1)$, and $w(E_2)$ yield the desired weak-type $(1,1)$ inequality.
\end{proof}

\subsection{Proof of Proposition \ref{prop:weak}}\label{ss:extrap}
We use a one-sided version of the standard Rubio de Francia algorithm. Given a weight $w \in \Atwoup$, and $\mathbb H$ its associated lower halfspace on which it is non-vanishing, we define $Sh = w^{-1} M^{\downarrow}(|h|w)$. Relying on \cite{sawyer-one-sided}, 
    \[ \norm{S}_{L^2(w)} = \norm{M^{\downarrow}}_{L^2(w^{-1})} \le C [w^{-1}]_{\Atwodown} = C [w]_{\Atwoup}.\]
Furthermore, defining the Rubio de Francia operator
    \[ Rh = \sum_{k=0}^\infty \frac{S^kh}{(2C[w]_{\Atwoup})^k},\]
we obtain that $\norm{R}_{L^2(w))} \le 2$ and $w \cdot Rh \in A_1^{\uparrow}(\mathbb H)$ with $[w \cdot Rh]_{A_1^{\uparrow}(\mathbb H)} \lesssim [w]_{\Atwoup}$. Therefore, given $f,h \in L^2(w)$ of unit norm, and $\lambda>0$, and letting $\Omega = \{ x \in \mathbb H : Tf > \lambda\}$, by Theorem \ref{thm:main} we have
    \[ \int_\Omega h w \le \int_\Omega w \cdot Rh \lesssim [w \cdot Rh]_{A_1^{\uparrow}(\mathbb H)}\log \left( e + [w \cdot Rh]_{A_1^{\uparrow}(\mathbb H)} \right) \norm{f}_{L^1(w \cdot Rh)},\]
But as stated above, $[w \cdot Rh]_{A_1^{\uparrow}(\mathbb H)} \lesssim [w]_{\Atwoup}$. Accordingly, Cauchy-Schwarz shows
    \[ \norm{f}_{L^1(w \cdot Rh)} \lesssim \norm{f}_{L^2(w)} \norm{Rh}_{L^2(w)} \lesssim 1,\]
and taking the supremum over all unit-norm $f$ and $h$ proves the result.

\subsection{Weak-type estimate for the localized one-sided maximal function}\label{ss:max}
We prove a slightly more general statement which implies Proposition \ref{thm:weak-type} as a consequence. We say pair a weights $(u,v)$ on $I_0$ satisfies the localized $A_1^{\uparrow}$ condition on $I_0$ if there exists a constant $C>0$ such that
\begin{equation}\label{e:A1-two-weight}
M_{I_0}^{\uparrow}u(x)\leq Cv(x)\quad\text{for a.e. }x\in I_0.
\end{equation}
\begin{proposition}
\label{p:two-weight-max}
$M_{I_0}^{\downarrow}$ maps $L^1(I_0,\,v\,dx)$ into $L^{1,\infty}(I_0,\,u\,dx)$ if and only if the pair $(u,v)$ satisfies the localized $A_1^{\downarrow}$ condition on $I_0$. Moreover, the bound in Proposition \ref{thm:weak-type} holds.
    
\end{proposition}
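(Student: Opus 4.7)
The plan is to adapt the classical two-weight weak-type characterization of the one-sided Hardy--Littlewood maximal operator (Sawyer \cite{sawyer-one-sided} and subsequent work by Mart\'in-Reyes and collaborators) to the localized setting on $I_0$. The proof follows the standard two-step outline: necessity by testing on indicator functions plus Lebesgue differentiation, and sufficiency by a one-sided Calder\'on--Zygmund-type level-set decomposition combined with the pointwise hypothesis $M_{I_0}^{\uparrow}u \le Cv$.

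For necessity, I would test the weak-type bound on $f = \chi_{(x, x+h)}$ with $(x-k, x+h) \subset I_0$. A direct computation gives $M_{I_0}^{\downarrow}\chi_{(x,x+h)}(y) \ge h/(x+h-y)$ for $y \in (x-k, x)$; choosing $\lambda = h/(h+k)$ in the weak-type inequality yields
\[
u((x-k, x)) \lesssim \tfrac{h+k}{h}\,v((x, x+h)).
\]
Setting $k = h$, dividing by $h$, taking the supremum over admissible $h$, and invoking Lebesgue differentiation for $v$ at a.e.\ $x \in I_0$ produces the pointwise condition $M_{I_0}^{\uparrow}u(x) \lesssim v(x)$.

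For sufficiency, fix $f \ge 0$ in $L^1(I_0, v\,dx)$ and $\lambda > 0$, and set $\Omega_\lambda = \{x \in I_0 : M_{I_0}^{\downarrow}f(x) > \lambda\}$. As an open subset of $I_0$, it decomposes into a disjoint union of maximal open intervals $\Omega_\lambda = \bigcup_j I_j$ with $I_j = (a_j, b_j)$, and a rising-sun analysis (using that $M_{I_0}^{\downarrow}f \le \lambda$ at the interior endpoints of each $I_j$, together with continuity of averages) yields the central identity $\int_{I_j}f = \lambda|I_j|$. The core pointwise estimate exploits the $A_1^{\downarrow}$ hypothesis as follows: for each $I_j$ and a.e.\ $y \in I_j^+ := (b_j, b_j + |I_j|) \cap I_0$, the window $(a_j, y)$, of length $y - a_j \in [|I_j|, 2|I_j|]$, is an admissible left-max witness at $y$, so
\[
\frac{u(I_j)}{|I_j|} \le \frac{2}{y - a_j}\int_{a_j}^{y}u \le 2\,M_{I_0}^{\uparrow}u(y) \le 2C\,v(y).
\]
Integrating this bound over $y \in I_j^+$ yields $u(I_j) \lesssim v(I_j^+)$, and summing over $j$ through a bounded-overlap argument together with the identity $|I_j| = \lambda^{-1}\int_{I_j}f$ delivers $u(\Omega_\lambda) \lesssim \lambda^{-1}\int_{I_0}f\,v$.

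The hard part will be the final summation: the estimate $u(I_j) \lesssim v(I_j^+)$ places $v$-mass on the shifted interval $I_j^+$ that must be coupled back to the $f$-mass on $I_j$ (or an adjacent maximal interval), relying on the one-sided Calder\'on--Zygmund structure $\int_{I_j}f = \lambda|I_j|$ together with the bounded overlap of the doubled intervals $\{I_j \cup I_j^+\}_j$ in one dimension. Once Proposition \ref{p:two-weight-max} is established, Proposition \ref{thm:weak-type} follows immediately by the choice $u = v = w$ and the reflection $x \mapsto -x$: under this symmetry $M_{I_0}^{\uparrow}$ corresponds to $M_{-I_0}^{\downarrow}$, and the single-weight condition $A_1^{\uparrow}(I_0)$ (that is, $M_{I_0}^{\downarrow}w \le Cw$) translates to the two-weight hypothesis $M_{-I_0}^{\uparrow}u \le Cv$ with $u = v = w$, supplying the desired weak-type bound.
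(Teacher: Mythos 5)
Your necessity argument is essentially fine (though you should keep $k$ and $h$ independent and send $h\to 0$ for fixed $k$, rather than setting $k=h$, so that you get $\tfrac1k\int_{x-k}^x u \le Cv(x)$ for \emph{every} admissible $k$ and not merely in the limit $k\to0$). The sufficiency direction, however, has a genuine gap, and it is exactly the one you flag as ``the hard part.'' Your core estimate $u(I_j)\lesssim v(I_j^+)$ transfers the $u$-mass of $I_j$ onto the $v$-mass of the \emph{shifted} interval $I_j^+$, which is disjoint from the support of $f\chi_{I_j}$. There is no mechanism to convert $\sum_j v(I_j^+)$ into $\lambda^{-1}\int f\,v$: bounded overlap of the $I_j\cup I_j^+$ only bounds the sum by $v(\tilde\Omega)$, and bounding $v(\tilde\Omega)$ by $\lambda^{-1}\|f\|_{L^1(v)}$ is a weak-type inequality for $v$ of the very kind you are trying to prove. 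The identity $\int_{I_j}f=\lambda|I_j|$ does not help, since $f$ and $v$ need have no correlation on $I_j$. In the localized setting there is the additional failure that $I_j^+=(b_j,b_j+|I_j|)\cap I_0$ may be empty or arbitrarily short when $I_j$ abuts the right endpoint of $I_0$, in which case your averaging over $I_j^+$ produces nothing.

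The missing ingredient is the interval-local estimate
\[
u(I_j)\;\le\;\frac{1}{\lambda}\int_{I_j} f(x)\,M^{\downarrow}_{I_0}u(x)\,dx,
\]
which keeps everything on $I_j$ itself and then finishes immediately from the pointwise hypothesis and disjointness of the $I_j$. This is proved (following Sawyer and Mart\'in-Reyes--Ortega--de la Torre) by introducing on $I_j=(a,b)$ the auxiliary \emph{decreasing} majorant $w(x)=\min_{a\le y\le x}M^{\downarrow}(\chi_{I_j}u)(y)$, for which one has $u(I_j)\le w(I_j)$, and then invoking the rising-sun property $\tfrac{1}{t-a}\int_a^t f\ge\lambda$ for all $t\in I_j$ together with the elementary lemma that $\int_a^b fw\ge\lambda\int_a^b w$ for any decreasing $w$ under that property. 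Without this (or an equivalent substitute), your proof does not close. As a minor point, be careful with orientations: the two-weight condition characterizing the weak type of $M^{\downarrow}_{I_0}$ involves the maximal operator of the \emph{opposite} orientation applied to $u$, and your write-up (like the statement itself) mixes the two.
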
 

The main difficulty in proving this result is that the one-sided maximal function only satisfies $w(I_j^-) \le M^{\uparrow}_{I_0} w(x)$ for every $x$ in $I_j^+$, while the two-sided maximal function satisfies $w(I) \le Mw(x)$ for every $x$ in $I$. We outline how this obstacle is overcome in \cites{martin-reyes-weighted,sawyer-one-sided}\footnote{These references take the opposite orientation ($\uparrow$ versus $\downarrow$) to ours, but a simple reflection argument connects the two.}. We show that if 
    \[ \{x\in I_0:M_{I_0}^{\downarrow}f(x)>\lambda\}.
=\bigcup_{j}I_j,
\]
where $I_j$ are disjoint, then for any weight $u$,
    \[ u(I_j) \le \frac{1}{\lambda} \int_{I_j} f(x) M^{\downarrow}_{I_0}u(x) \, dx.\]
Then, Proposition \ref{p:two-weight-max} follows by summing over $j$ and invoking the localized $A_1^{\uparrow}$ condition \eqref{e:A1-two-weight}. Let $j$ be fixed and write $I=(a,b)=I_j$. Introduce the auxiliary weight 
    \[ w(x) = \min_{a \le y \le x} M^{\downarrow}(\chi_I u)(y).\] 
Immediately, $w$ is decreasing. Furthermore, from \cite{martin-reyes-weighted}*{Lemma~2(ii)}, we have $u(I) \le w(I)$ for any interval $I$. Thus, if we can establish 
\[
w(I) \le \lambda^{-1} \int_I f(x)\, w(x)\, dx,
\]
the desired estimate follows. We rely on \cite{sawyer-one-sided}*{Lemma~1}, which states that since $M^{\uparrow}(\chi_{I_0} f)(x) > \lambda$ for all $x \in (a,b)$, but not for $x$ in a small neighborhood outside of $I$, we have
\begin{equation}
\label{e:maximal-int}
\frac{1}{t-a} \int_a^t f(x)\, dx \ge \lambda, \qquad \text{for every } t \in (a,b).
\end{equation}
Finally, \cite{martin-reyes-weighted}*{Lemma~1} states that if \eqref{e:maximal-int} holds, then for any decreasing function $w$,
\[
\int_{a}^{b} f(x)\, w(x)\, dx \ge \lambda \int_{a}^{b} w(x)\, dx.
\]
Since our auxiliary weight $w$ is decreasing, Proposition~\ref{p:two-weight-max} follows.

\bibliographystyle{amsplain}

\end{document}